\newcommand{\Kl}[1]{\left( #1 \right)}
\newcommand{\Z}{{\mathbb Z}}
\newcommand{\N}{{\mathbb N}}
\newcommand{\Q}{{\mathbb Q}}
\newcommand{\R}{{\mathbb R}}
\newcommand{\C}{{\mathbb C}}
\newcommand{\eps}{{\epsilon}}
\newcommand{\RN}[1]{\uppercase\expandafter{\romannumeral#1}}
\newtheorem{thm}{Theorem}
\newtheorem{lemma}[thm]{Lemma}
\newtheorem{kor}[thm]{Corollary}
\theoremstyle{definition}
\newtheorem{bem}[thm]{Remark}
\newcommand{\secret}[1]{}
\DeclarePairedDelimiter\abs{\lvert}{\rvert}%
\DeclarePairedDelimiter\norm{\lVert}{\rVert}%
\let\oldabs\abs
\def\abs{\@ifstar{\oldabs}{\oldabs*}}
\let\oldnorm\norm
\def\norm{\@ifstar{\oldnorm}{\oldnorm*}}
\keywords{Siegel cusp forms, spinor L series, moments of L-functions, Kloosterman sums, Petersson formula, B\"ocherer's conjecture} 
\subjclass[2010]{Primary 11L05, 11F46, 11F66, 11F72}
\title[Moments of Spinor L-Functions]{Moments of Spinor L-Functions  and symplectic Kloosterman sums}
\author{Fabian Waibel}
\begin{document}

\begin{abstract} 
	We compute the second moment of spinor $L$-functions at central points of Siegel modular forms on congruence subgroups of large prime level $N$ and give applications to non-vanishing. 
\end{abstract}

\maketitle
\section{Introduction}

For the analytical theory of modular forms on congruence subgroups in $\operatorname{GL}_2(\Z)$, spectral summation formulas such as the Petersson formula are a basic tool. A primary component is a sum over Kloosterman sums and many applications rely on a careful estimation of the latter. For Siegel cusp forms, Kitaoka \cite{Ki1984} introduced an analogue to Petersson's formula that was extended in \cite{CKM2011} to include congruence subgroups. In this case, however, the off-diagonal terms are very complex and contain generalized Kloosterman  sums that run over matrices in $\operatorname{Sp}_4(\Z)$. So far, the literature on these sums is limited.

The aim of this article is to evaluate spectral averages of second moments of spinor $L$-functions for Siegel congruence groups of large prime level by means of the Kitaoka-Petersson formula. The core of this computation is the manipulation of symplectic Kloosterman sums which may be of independent interest. 

To state our results, we fix some notation. For a natural number $N$ let
\begin{align*}
\Gamma_0^{(2)}(N) =  \bigg\{ \begin{pmatrix}
A & B   \\
C & D\\
\end{pmatrix} \in \operatorname{Sp}(4,\Z)  \quad \Big| \quad C \equiv 0 ~(\operatorname{mod} N) \bigg\}
\end{align*} 
denote the Siegel congruence group of level $N$ and let $\mathbb{H}_2$ be the Siegel upper half plane consisting of all symmetric 2-by-2 complex matrices whose imaginary parts are positive definite. Let  $S_k^{(2)}(N)$ denote  the space of Siegel cusp forms on $\Gamma_0^{(2)}(N)$ of weight $k$. For $F,G \in S_k^{(2)}(N)$, we define the Petersson inner product by
\begin{align} \label{eq:0.0}
\langle F,G \rangle = \frac{1}{[\operatorname{Sp}(4,\Z):\Gamma^{(2)}_0(N)]} \int_{\Gamma_0^{(2)}(N) \backslash \mathbb{H}_2} F(Z)\overline{G(Z)} \Kl{\operatorname{det}Y}^k \frac{dX dY}{(\operatorname{det}Y)^3}.
\end{align}
Any $F \in S_k^{(2)}(N)$ has a Fourier expansion 
\begin{align} \label{Fourier}
F(Z) = \sum_{T \in \mathscr{S}}a_F(T) \Kl{\operatorname{det}T}^{\frac{k}{2}-\frac{3}{4}} e(\operatorname{tr}(TZ)),
\end{align}
with Fourier coefficients $a_F(T)$, where $\mathscr{S}$ is the set of symmetric, positive definite, half integral matrices $T$ with integral diagonal. 
We choose an orthogonal basis $B^{(2)}_k(N)^{\text{new}}$ of  newforms in the sense of \cite{Rs2005} in $S_k^{(2)}(N)$ such that the adelization of each element generates an irreducible representation and for each prime $p \mid N$, $F$ is an eigenfunction of the $T_2(p)$ operator, cf.\ \eqref{T_p}. 

In the following, $N$ is prime and $N \equiv 3~ (\operatorname{mod} 4)$.\footnote{The assumptions $N \equiv 3\,(\operatorname{mod}  4)$ is required for local non-archimedean computations in \cite{DPSS2015} and Section \ref{sec:2}. The cited results from \cite{DPSS2015} hold for squarefree $N$ with prime divisors $p \equiv 3 \,(\operatorname{mod} 4)$. 
} For  $F \in B^{(2)}_k(N)^{\textnormal{new}}$ of even weight $k$,  we let $L(s,F)$ denote the spinor $L$-function, normalized so that its critical strip is $0<\Re s <1$. This is a degree 4 $L$-function.  Furthermore, we set
\begin{align}
w_{F,N} := \frac{\pi^{1/2}}{4} (4\pi)^{3-2k}\Gamma(k-3/2)\Gamma(k-2) \frac{\abs{a_F(I)}^2}{ [\operatorname{Sp}_4(\Z):\Gamma^{(2)}_0(N)] \,  \langle F,F\rangle},
\end{align}
where $I$ is the 2-by-2 identity matrix. Recall that $[\operatorname{Sp}_4(\Z):\Gamma_0(N)] \asymp N^{3}$. These ``harmonic'' weights appear naturally in the Kitaoka-Petersson formula. On average, they are of size $\asymp N^{-3}$, i.e. it holds by \cite[p.\,37]{DPSS2015}  that 
\begin{align} \label{eq:0.9}
\sum_{F \in B_k^{(2)}(N)^{\textnormal{new}}} w_{F,N} = 1 + \mathcal{O}(N^{-1}k^{-2/3}). 
\end{align} 
In addition, the weights $w_{F,N}$ are related to central values of $L$-functions. This remarkable conjecture is due to  B\"ocherer and was recently proven in \cite[Theorem 2 \& Remark 6]{FM2016}. Let $S_k^{(2)}(N)^{\textnormal{new,T}}$ denote the space of newforms orthogonal to Saito-Kurokawa lifts. For $F \in S_k^{(2)}(N)^{\textnormal{new,T}}$ that satisfy $w_{F,N} \neq 0$, we have by  \cite[Theorem 1.12]{DPSS2015}  that
\begin{align} \label{w}
w_{F,N}  = c \, \frac{L(1/2,F) L(1/2, F \times \chi_{-4})}{N^3  L(1,\pi_F,\operatorname{Ad})},
\end{align}
where $L(s,\pi_F,\operatorname{Ad})$ denotes the degree 10 adjoint $L$-function and $c$ is an explicit constant depending on $F$, see Lemma \ref{LemmaT}. 

Let  $q_1,q_2 $ be two fixed coprime fundamental discriminants (possibly 1) and denote by $\chi_{q_1}$ the character which maps $x$ to the Kronecker symbol $ \Kl{\frac{q_1}{x}}$.

\begin{thm} \label{Theorem1} For $k\geq 10$ and a prime $N \equiv 3~ (\operatorname{mod} 4)$ it holds that
\begin{align} \label{eq:main}
\hspace{-1mm} \sum_{F \in B_k^{(2)}(N)^{\textnormal{new}}} \hspace{-1mm}  w_{F,N} \, L(1/2,F\times \chi_{q_1}) \overline{L(1/2,F\times \chi_{q_2})}  = \operatorname{main~ term} + \mathcal{O}_{q_1,q_2,k}(N^{-\alpha+\epsilon}),
\end{align} 
where the main term is the residue at $s=t=0$ of the expression \eqref{eq:pole} and $\alpha = \frac{1}{2}$ for $k \geq 20$ and $\alpha = \frac{k-9}{k+1}$ for $k\leq 18$. In particular, if $q_1=q_2=1$, the main term equals
\begin{align} \label{re:1}
\frac{4}{3} L(1,\chi_{-4})^2 P_1(\operatorname{log} N)
\end{align}
for a certain monic polynomial $P_1$ of degree 3 depending on $k$. 

If $q_1, q_2 \in \{1,-4\}$, the main term equals 
\begin{align} \label{re:2}
2 L(1,\chi_{-4})^2 P_2(\operatorname{log} N)
\end{align}
for a certain monic polynomial $P_3$ of degree 2 depending on $q_1,q_2$ and $k$.

If $q_1,q_2$ are two coprime integers different from 1 and -4, the main term equals
\begin{align} \label{re:3}
4 L(1,\chi_{q_1}) L(1,\chi_{-4q_1}) L(1,\chi_{q_2}) L(1,\chi_{-4q_2}) L(1,\chi_{q_1 q_2}). 
\end{align}
\end{thm}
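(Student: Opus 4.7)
The plan is a direct Petersson-type second-moment computation. First, apply an approximate functional equation to each $L$-factor, writing
\[
L(1/2, F\times\chi_{q_i}) = \sum_{n\geq 1}\frac{\lambda_F(n)\chi_{q_i}(n)}{\sqrt{n}}\, V_i(n/X_i) + (\text{dual term}),
\]
with $X_i$ a polynomial in $N$ (coming from the analytic conductor of the twist), so that the left-hand side becomes a double Dirichlet sum in Hecke eigenvalues $\lambda_F(m)\overline{\lambda_F(n)}$ weighted against $w_{F,N}$. Hecke multiplicativity linearizes the product of eigenvalues into a linear combination of single eigenvalues $\lambda_F(\ell)$; combined with the classical Hecke/Andrianov relations expressing $\lambda_F(\ell)\, a_F(I)$ in terms of Fourier coefficients $a_F(T)$, and with the factor $|a_F(I)|^2/\langle F,F\rangle$ hidden inside $w_{F,N}$, the inner $F$-sum collapses to averages of the shape $\sum_F a_F(S)\overline{a_F(T)}/\langle F,F\rangle$, which are exactly the output of the Kitaoka--Petersson formula of \cite{CKM2011}.

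Next, apply this formula on $\Gamma_0^{(2)}(N)$. It splits the sum into a diagonal contribution $\delta_{S\sim T}$ and an off-diagonal series indexed by matrices $C\equiv 0 \pmod{N}$ in the bottom block of $\operatorname{Sp}_4(\Z)$, each term a product of a symplectic Kloosterman sum and a Bessel-type integral transform. Reassembling the diagonal through Mellin inversion of the $V_i$ yields a double integral in parameters $(s,t)$; shifting contours to $\Re s = \Re t = 0$ picks up the residue at $s=t=0$ of \eqref{eq:pole}. The three formulas \eqref{re:1}, \eqref{re:2}, \eqref{re:3} correspond to the different pole multiplicities at $s=t=0$ (triple, double, or absent) arising from the interaction of the quadratic characters $\chi_{q_1}, \chi_{q_2}$, and the $L(1,\chi_{-4})$ factors enter through B\"ocherer's identity \eqref{w} for $w_{F,N}$.

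The principal obstacle is the off-diagonal. Symplectic Kloosterman sums on $\operatorname{Sp}_4(\Z)$ with a prime-level congruence condition are much more intricate than their classical $\operatorname{GL}_2$ counterparts, and an entrywise Weil bound is insufficient to beat the diagonal. I would decompose the $C$-matrices according to their Bruhat cell (Borel, Klingen, Siegel, or the big cell) and elementary divisors, isolate the $N$-part of each Kloosterman sum, and exploit the congruence $C\equiv 0 \pmod{N}$ together with character-sum cancellation (Poisson summation, stratification by the rank of certain linear forms, and stationary-phase analysis of the Bessel transform). The resulting power saving, combined with the effective length of the approximate functional equation, yields $\alpha = 1/2$ in the stable range $k\geq 20$; for $10\leq k\leq 18$ the shorter support of the archimedean Bessel transform limits the saving and forces the weaker exponent $\alpha = (k-9)/(k+1)$. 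Assembling the diagonal main term with this off-diagonal bound produces the stated asymptotic.
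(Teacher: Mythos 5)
Your overall architecture (approximate functional equation, Andrianov's identity to pass from spinor coefficients to $a_F(mI)$, Kitaoka--Petersson, Mellin inversion of the diagonal) does match the paper's skeleton, but there is a genuine gap in how you treat the off-diagonal, and it is fatal to the stated error term. You propose to beat the diagonal by \emph{bounding} the Kloosterman/Bessel terms (Bruhat-cell decomposition, Weil-type input, stratification, stationary phase). In fact the rank $2$ contribution is \emph{not} $\mathcal{O}(N^{-\alpha+\epsilon})$: after truncating $C$ by Bessel decay, applying Poisson summation in $m_1,m_2$ modulo $N[c,q_i]$, factoring $K(\mu_2I,\mu_1I;NC)$ into a modulus-$NI$ part and a modulus-$C$ part (Lemma \ref{lemma3}), evaluating the modulus-$NI$ sum explicitly (Lemma \ref{lemma4}) and counting the resulting congruences (Lemma \ref{lemma5}), one finds that the moduli $C\in\operatorname{GO}_2(\Z)$ survive and produce, via Lemma \ref{Blomer}, an $N$-independent quantity of size $O(1)$ when $q_1=q_2=1$. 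This quantity is needed \emph{exactly}: it cancels the residue at $t=-s$ that the diagonal term picks up from the pole of $\zeta_{\Q(i)}(s+t+1)$, and only after this cancellation is the remainder $\mathcal{O}(N^{-\alpha+\epsilon})$ with the main term equal to the residue at $s=t=0$ of \eqref{eq:pole}. A scheme that only seeks a power saving in the off-diagonal cannot produce this identity, and moreover the vanishing for $(q_1,q_2)\neq(1,1)$ comes from the exact character-sum evaluation in Lemma \ref{Blomer}, not from a generic square-root bound.

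Several further steps you omit are also load-bearing. To apply Andrianov's formula you must first discard the terms with $N\mid n$, which requires the bound $A_F(N)\ll N^{-1/11}$ of Lemma \ref{Lemma:4} (a non-trivial input via transfer of $\pi_F$ to $\operatorname{GL}(4)$ and bounds towards Ramanujan); the Saito--Kurokawa newforms cannot be fed into the same approximate functional equation (their coefficients are too large) and must be removed on both sides by a separate argument using \eqref{eq:05} and the functional equation; and Kitaoka--Petersson runs over a full orthogonal basis, so oldforms must be re-inserted, which requires the explicit B\"ocherer-type evaluations of $w_{F,N}$ for the oldform bases (Lemmas \ref{LemmaT} and \ref{LemmaSK}). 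Finally, the factors $L(1,\chi_{-4})$ in \eqref{re:1}--\eqref{re:2} come from Andrianov's identity (the Dirichlet series $L(s+1/2,\chi_{q})L(s+1/2,\chi_{-4q})$ entering \eqref{eq:pole}), not from B\"ocherer's formula \eqref{w}, which plays no role in the proof of the asymptotic itself.
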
 
In view of B\"ocherer's conjecture, Theorem \ref{Theorem1} even evaluates a fourth moment of central values and  a degree 16 L-function. 

For large weights and the full modular group, i.e. $N=1$, Blomer \cite{Bl2016} shows a very similar result and the proof of Theorem \ref{Theorem1} is  based on his work. While obtaining a uniform estimate in weight $k$ and level $N$ is principally possible, this requires however a Petersson formula for newforms. In the $\operatorname{GL}(2)$ case, such a formula is well-known and derived by  first constructing  an explicit orthogonal basis of oldforms and then applying M\"obius inversion to sieve these forms out, cf. \cite{PY2016}. 

The main difficulty of proving Theorem \ref{Theorem1} is treating  the off-diagonal contribution in the Kitaoka-Petersson formula. This term is a sum over Bessel functions and symplectic Kloosterman sums whose ``moduli'' run over integral 2-by-2 matrices with all entries divisible by $N$. Consequently, we decompose each Klooster\-man sum into two parts, separating a Kloosterman sum of modulus $N\,I$ that is straightforward to handle.  After applying Poisson summation, we see that the sum vanishes unless a congruence condition is fulfilled. In this way, only matrices in $\operatorname{GO}_2(\Z)= \R_{>0} \cdot O(2) \cap \operatorname{Mat}_2(\Z)$ survive as possible moduli for the remaining Kloosterman sums. This corresponds to the case of large weight in \cite{Bl2016} and the remaining term can be computed in exactly the same way. In contrast to Blomer, who uses special features of Bessel functions, we manipulate symplectic exponential sums and evaluate congruences. Hence, this work can be seen as a non-archimedean version of \cite{Bl2016}, where the analysis of oscillatory integrals is replaced - in disguise - by its $p$-adic analogue. 

The contribution of  Saito-Kurokawa lifts to the left hand side of \eqref{eq:main} is very small. If $f$ is the elliptic modular newform corresponding to the lift $F$, then $w_{F,N}$ is related to central $L$-values of $f$, i.e. by \cite[Theorem 3.12]{DPSS2015} we have that
\begin{align} \label{eq:05}
w_{F,N} = \frac{3 \, (2\pi)^7  \Gamma(2k-4) }{N^3 \, \Gamma(2k-1) } \frac{L(1/2,f \times \chi_{-4}) }{L(3/2,f)L(1,f,\operatorname{Ad})}.
\end{align} 
applying simply the convexity bound for central $L$-values, 
we see that the contribution of the $\mathcal{O}(N)$ Saito-Kurokawa lifts  is $\mathcal{O}(N^{-5/4+\eps})$. 

Let $B_k^{(2)}(N)^{\textnormal{new,T}}$ denote a basis of $S_k^{(2)}(N)^{\textnormal{new,T}}$ with the same properties as in Theorem \ref{Theorem1}. By applying \eqref{eq:0.9}, Cauchy-Schwarz and \eqref{re:2}, we get:\footnote{We use the superscript T for the space  orthogonal to Saito-Kurokawa lifts since conjecturally the associated local representations are tempered everywhere.} 

\begin{kor} For $k \geq 10$ and a sufficiently large prime $N \equiv 3\, (\operatorname{mod} 4)$, it holds that
\begin{align*}
\sum_{\substack{F \in B_k^{(2)}(N)^{\textnormal{new,T}} \\ w_{F,N} \neq 0 }} \frac{1}{L(1,\pi_f,\operatorname{Ad})} \gg \frac{N^3}{(\operatorname{log}N)^2}.
\end{align*} 
In particular, if $L(1,\pi_f,\operatorname{Ad})$ has no zeros in $\abs{s-1} \ll N^{-\eps}$, then $N^{3-\eps}$ forms $F \in  B_k^{(2)}(N)^{\textnormal{new,T}}$ satisfy $w_{F,N} \neq 0$ and thus $L(1/2,F)L(1/2,F \times \chi_{-4}) \neq 0$. 
\end{kor}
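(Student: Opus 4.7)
The plan is a weighted Cauchy--Schwarz: I will split $w_{F,N} = (w_{F,N} M_F)^{1/2} \cdot (w_{F,N}/M_F)^{1/2}$, where $M_F := L(1/2,F) L(1/2, F \times \chi_{-4})$, and then use B\"ocherer's formula \eqref{w} to exchange the factor $M_F^{-1}$ for $N^3 L(1,\pi_F, \operatorname{Ad})^{-1}$.

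First, I will specialize Theorem~\ref{Theorem1} to $(q_1, q_2) = (1, -4)$ and read off from \eqref{re:2} that, since the polynomial $P_2$ is monic of degree $2$,
\begin{align*}
\sum_{F \in B_k^{(2)}(N)^{\textnormal{new}}} w_{F,N} M_F \asymp (\log N)^2.
\end{align*}
The bound $w_{F,N} \ll N^{-3}$ visible in \eqref{eq:05} together with the $\mathcal{O}(N)$ count of Saito--Kurokawa lifts (and the explicit $\mathcal{O}(N^{-5/4+\epsilon})$ bound for their contribution to \eqref{eq:main} noted just after \eqref{eq:05}) means this asymptotic, and the mass formula $\sum w_{F,N} = 1 + o(1)$ from \eqref{eq:0.9}, both persist after restriction to $B_k^{(2)}(N)^{\textnormal{new,T}}$.

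Next, \eqref{w} shows that the set $\{F : w_{F,N} \neq 0\}$ agrees with $\{F : M_F \neq 0\}$ and that in fact $M_F > 0$ on it. On this set, Cauchy--Schwarz gives
\begin{align*}
\Kl{\sum w_{F,N}}^2 \leq \Kl{\sum w_{F,N} M_F} \Kl{\sum \frac{w_{F,N}}{M_F}},
\end{align*}
hence $\sum w_{F,N}/M_F \gg (\log N)^{-2}$. Substituting $w_{F,N}/M_F = c/(N^3 L(1,\pi_F,\operatorname{Ad}))$ from \eqref{w} produces the first claim; here I rely on the fact that the $F$-dependent constant $c$ of Lemma~\ref{LemmaT} is uniformly bounded above and below, which will need a short verification from the explicit local factor list but is routine since $N$ is prime.

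For the second assertion, the hypothesis of no zeros of $L(s, \pi_F, \operatorname{Ad})$ in $|s-1| \ll N^{-\epsilon}$, combined with the convexity estimate $|L(s,\pi_F,\operatorname{Ad})| \ll N^{\epsilon}$ and a standard Borel--Carath\'eodory / Jensen argument (applied to $\log L$ in the zero-free disk, with the right-edge value controlled by absolute convergence of the Euler product), yields $L(1,\pi_F,\operatorname{Ad}) \gg N^{-\epsilon}$, so each reciprocal contributes $\ll N^{\epsilon}$. Comparing to the lower bound $\gg N^3 (\log N)^{-2}$ from Step~2 forces at least $\gg N^{3-\epsilon}$ forms $F$ to be present, and for any such $F$ the relation \eqref{w} immediately gives $L(1/2,F) L(1/2, F\times \chi_{-4}) \neq 0$.

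The genuine analytic content — the asymptotic for the second moment — has already been done in Theorem~\ref{Theorem1}; everything here is a three-line Cauchy--Schwarz, so there is no substantive obstacle. The only book-keeping point worth double-checking is the uniform two-sided bound on the constant $c$ in B\"ocherer's formula, without which the passage $\sum w_{F,N}/M_F \to \sum 1/L(1,\pi_F,\operatorname{Ad})$ would lose its quantitative content.
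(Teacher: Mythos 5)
Your proposal is correct and follows essentially the same route the paper intends: the mass formula \eqref{eq:0.9} (with the Saito--Kurokawa contribution discarded as negligible), Cauchy--Schwarz splitting $w_{F,N}$ against $M_F=L(1/2,F)L(1/2,F\times\chi_{-4})$, the second-moment bound $\ll(\log N)^2$ from Theorem \ref{Theorem1} with $q_1,q_2\in\{1,-4\}$ as in \eqref{re:2}, and B\"ocherer's relation \eqref{w} to convert $w_{F,N}/M_F$ into $c/(N^3L(1,\pi_F,\operatorname{Ad}))$, where $c$ is indeed uniformly bounded for prime $N$ by Lemma \ref{LemmaT}. The deduction of the quantitative non-vanishing from a zero-free region via a standard lower bound $L(1,\pi_F,\operatorname{Ad})\gg N^{-\eps}$ also matches the paper's intent.
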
 
\secret{
	\begin{align}
	(\operatorname{log}k)^2 \ll \Kl{\sum_{F \in B_k^{(2)}(N)} w_{F,N}^{1/2} L(1/2,F)}^2 \leq \sum_{F \in B_k^{(2)}(N)} w_{F,N} \sum_{F \in B_k^{(2)}(N)} w_{F,N} L(1/2,F)^2
	\end{align}
	and 
	\begin{align}
	(log N)^{-2} \ll \Kl{\sum_{\substack{F \in B_k^{(2)(N)} \\ L(1/2,F) \neq 0}} w_{F,N} }^2 \ll \sum_{\substack{F \in B_k^{(2)(N)} \\ w_{F,N} \neq 0 }} \frac{1}{L(1,\pi_f,\operatorname{Ad})} \sum_{\substack{F \in B_k^{(2)(N)} \\ w_{F,N} \neq 0 }} w_{F,N} L(1/2,F) L(1/2,F \times \chi_{-4})
	\end{align} }

Moreover, we get the following quadruple non-vanishing result:

\begin{kor}
Let $q_1$ and $q_2$ be any two coprime fundamental discriminants and let $N$ be sufficiently large. Then, there exists  $F \in S_k^{(2)}(N)^{\text{new,T}}$ such that
\begin{align*}
L(1/2,F)L(1/2,F \times \chi_{-4}) L(1/2,F \times \chi_{q_1})L(1/2,F \times \chi_{q_2}) \neq 0.
\end{align*}
\end{kor}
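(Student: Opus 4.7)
The plan is to apply Theorem~\ref{Theorem1} with the prescribed pair $(q_1,q_2)$, remove the negligible contribution of Saito--Kurokawa lifts from the resulting sum, and then extract a suitable $F$ by pigeonhole.

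First, I apply Theorem~\ref{Theorem1} to obtain
\[
S(N) := \sum_{F \in B_k^{(2)}(N)^{\textnormal{new}}} w_{F,N}\, L(1/2,F\times\chi_{q_1})\overline{L(1/2,F\times\chi_{q_2})} = \operatorname{MT} + \mathcal{O}_{q_1,q_2,k}(N^{-\alpha+\eps}),
\]
where $\operatorname{MT}$ is the main term of Theorem~\ref{Theorem1}. In the cases listed, $\operatorname{MT}$ is explicit and nonzero (see \eqref{re:1}--\eqref{re:3}), so $|\operatorname{MT}| \gg_{q_1,q_2,k} 1$; in fact, in the logarithmic cases one has $|\operatorname{MT}| \gg (\log N)^{j}$ for some $j\ge 2$. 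Either way, $|S(N)| \gg 1$ as $N \to \infty$.

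Next, I would separate the Saito--Kurokawa lifts from $S(N)$. For an SK lift $F$ arising from $f \in S_{2k-2}(N)$, the expression \eqref{eq:05} combined with the convexity bound for $L(1/2,f\times\chi_{-4})$ gives $w_{F,N} \ll_k N^{-5/2+\eps}$; the factorization of the spinor $L$-function of an SK lift into $\operatorname{GL}(1)$ and $\operatorname{GL}(2)$ pieces, together with convexity for $L(1/2,f\times\chi_{q_i})$, yields $|L(1/2,F\times\chi_{q_i})| \ll_{k,q_i} N^{1/2+\eps}$ whenever $\chi_{q_i}$ is nontrivial (the trivial character requires only minor modification to account for its pole). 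Since there are $\mathcal{O}(N)$ SK lifts, their total contribution to $S(N)$ is $\mathcal{O}(N^{-1/2+\eps})$, which is negligible against $\operatorname{MT}$.

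The restricted sum over $F \in B_k^{(2)}(N)^{\textnormal{new,T}}$ is therefore nonzero for $N$ sufficiently large, and pigeonhole yields some $F \in B_k^{(2)}(N)^{\textnormal{new,T}}$ with $w_{F,N}\neq 0$ and $L(1/2,F\times\chi_{q_1})L(1/2,F\times\chi_{q_2})\neq 0$. For this $F$, B\"ocherer's formula \eqref{w} then forces $L(1/2,F)L(1/2,F\times\chi_{-4})\neq 0$, and the four nonvanishings together prove the corollary. The only real obstacle is verifying that the SK contribution is dominated by $\operatorname{MT}$; this is routine given the explicit formula \eqref{eq:05} and standard convexity estimates, and requires no new analytic input beyond Theorem~\ref{Theorem1}.
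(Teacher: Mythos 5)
Your proposal is correct and follows exactly the route the paper intends: Theorem \ref{Theorem1} gives a nonzero main term (by nonvanishing of Dirichlet $L$-functions at $1$), the Saito--Kurokawa contribution is negligible as already noted via \eqref{eq:05} and convexity (the paper gets $\mathcal{O}(N^{-5/4+\eps})$, your $\mathcal{O}(N^{-1/2+\eps})$ is weaker but amply sufficient), and pigeonhole plus B\"ocherer's relation \eqref{w} converts $w_{F,N}\neq 0$ into the remaining two nonvanishing factors. No gaps beyond the minor imprecision in the convexity exponent, which does not affect the conclusion.
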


\textit{Notation and conventions.} 
For an $L$-function $L(s)= \prod_{p} F_p(p^{-s})$, we set $L^{N} = \prod_{p \not \mid N} F_p(p^{-s})$.  We use the usual $\epsilon$-convention and all implied constants may depend on $\epsilon$.  A term is \textit{negligible}, if it is of size $\mathcal{O}(N^{-100})$. By $[.,.], (.,.)$ we refer to the least common multiple respectively the greatest common divisor of two integers. Furthermore, we set $\ell:= k-3/2$.  

\section{Representations, Newform Theory and Saito-Kurokawa Lifts} 

Let $N$ be squarefree integer with  prime divisors $p \equiv 3\, (\operatorname{mod} 4)$. To define the oldspace, we introduce four endomorphisms $T_0(p),T_1(p),T_2(p),T_3(p)$ of $ S_k^{(2)}(N)$. The operator $T_0(p)$ is simply the identity, while $T_1(p)$ is the Atkin-Lehner involution that acts on $F \in  S_k^{(2)}(Np^{-1}) \subset S_k^{(2)}(N)$ by $(T_1(p)F)(Z)=p^k F(pZ)$. The third operator $T_2(p)$ maps $F$ with Fourier coefficients as in \eqref{Fourier} onto 
\begin{align} \label{T_p}
T_2(p) F = \sum_{T \in \mathscr{S}} a_F(pT) (\operatorname{det}pT)^{k/2-3/4} e(\operatorname{tr}(TZ)),
\end{align}
and $T_3(p) = T_1(p) \circ T_2(p)$, cf. \cite{Rs2005}. We define the oldspace $S_k^{(2)}(N)^{\textnormal{old}}$ in $S^{(2)}_k(N)$ as the sum of the spaces
\begin{align*}
T_i(p) S^{(2)}_k(Np^{-1}), \quad i=0,1,2,3, \quad p \mid N,
\end{align*}
and the newspace as the orthogonal complement of $S_k^{(2)}(N)^{\textnormal{old}}$ inside $S^{(2)}_k(N)$ with respect to \eqref{eq:0.0}.  Furthermore, the space $S_k^{(2)}(N)$ contains a subspace of lifts  from elliptic Hecke cusp forms $f$ of weight $2k-2$ and level $N$, which we denote by $S_k^{(2)}(N)^{\textnormal{SK}}$. This gives us the following orthogonal decompositions:
\begin{align*}
S_k^{(2)}(N) &=  S_k^{(2)}(N)^{\textnormal{T}} \hspace{-0.7cm} &&\oplus S_k^{(2)}(N)^{\textnormal{SK}} \\
&= S_k^{(2)}(N)^{\textnormal{new,T}} \oplus  S_k^{(2)}(N)^{\textnormal{old,T}} 
\hspace{-0.7cm} &&\oplus S_k^{(2)}(N)^{\textnormal{new, SK}} \oplus S_k^{(2)}(N)^{\textnormal{old,SK}}.
\end{align*}

For the spectral summation formula, we require orthogonal bases of these four subspaces. As in \cite[\S 2]{DPSS2015}, we construct them locally. Set $G=\operatorname{GSp}(4)$ and define 
\begin{align}
P_{1}(p) = \left\{ \begin{pmatrix}
A & B \\ C & D
\end{pmatrix}  \in \operatorname{G}(\Z_p) \,  |\, C \equiv 0 \operatorname{mod} p \Z_p
\right\}.
\end{align}
Let $\mathbb{A} = \otimes_v' \Q_v $ denote the ring of adeles. For $F \in S_k^{(2)}(N)$ we define the adelization $\theta_F \in L_0(G(\Q) \backslash G(\mathbb{A})$ and its generated cuspidal representation $\pi_F$ of $G(\mathbb{A})$ as in \cite{Sa2015}. By a suitable normalization of inner products, $ F \to \theta_F$ is an injective isometry and we denote the image by $V_k$. We decompose $V_k$ orthogonally into 
\begin{align*}
V_k = \bigoplus_{\pi \in \mathcal{S}}  V_k(\pi), 
\end{align*}
where $V_k(\pi)$ is the subspace of $V_k$ composed of of all elements that generate $\pi$ and $\mathcal{S}$ is the set of irreducible admissible representations $\pi$ of $G(\mathbb{A})$   with ${V_k(\pi) \neq \emptyset}$. Via the inverse of the adelization map, any  basis of $V_k$ corresponds to a  basis of $S_k^{(2)}(N)$. Hence, we can find a basis of $S_k^{(2)}(N)$ such that every element is associated to an irreducible representation. From now on, we only consider such forms. 

We write $\pi= \otimes_v' \pi_v$ and factorize  $V_k(\pi)$ accordingly as 
\begin{align*}
V_k(\pi) = \otimes_v' V_k(\pi_v).
\end{align*}
Then, $V_k(\pi_\infty)$ is one-dimensional and contains the unique (up to multiples) lowest-weight vector in $\pi_\infty$. For unramified primes $p \not \mid N$, $V_k(\pi_p)= \pi_p^{G(\Z_p)}$, i.e. is composed of all $G(\Z_p)$ fixed vectors in $\pi_p$. This space has also dimension one. For ramified primes, $V_k(\pi_p) = \pi_p^{P_1(p)}$, i.e. contains all $P_1(p)$ fixed vectors in $\pi_p$. In the following, we construct orthogonal bases of the local spaces  $\pi_p^{P_1(p)}$ for $p \mid N$ with the intention to obtain an orthogonal basis of  $V_k(\pi)$. A principal tool for this purpose is the categorization of possible local representations $\pi_p$ into different types, cf.  e.g. \cite[Table 1]{Rs2005}. 

The local representations indicate whether $F \in S_k^{(2)}(N)$ is a new- or oldform respectively a Saito-Kurokawa lift. More precisely, $F$ is a newform  if and only if $\pi_{F,p}$ is non-spherical (i.e. does not contain a spherical vector) at all primes $p \mid N$. If $F \in S_k^{(2)}(N)^{\textnormal{T}}$ then $\pi_{F,p}$ is a tempered type \RN{1} representation (meaning that all characters are unitary) whenever $\pi_{F,p}$ is spherical and otherwise of type \RN{2}a, \RN{3}a, \RN{5}b/c, \RN{6} a or \RN{6}b  (conjecturally type \RN{5}b/c cannot happen). If $F \in S_k^{(2)}(N)^{\textnormal{SK}}$ then $\pi_{F,p}$ is type \RN{2}b whenever it is spherical, and otherwise of type \RN{6}b; cf. \cite[\S 3]{DPSS2015}. 

By this local approach, Dickson et al.\ \cite{DPSS2015} construct an orthogonal basis of $S_k^{(2)}(N)^{\text{new},T}$ such that every element is an eigenform of the $T_2(p)$ operator for all $p \mid N$. The latter means locally that at places $p \mid N$ the adelization is an eigenform of a certain local endomorphism  $T_{1,0}$. If $\pi_{p}$ is of type \RN{2}a, \RN{5}b/c, \RN{6} there is a unique (up to multiples) $P_1(p)$-invariant vector $\phi_p$ in $\pi_{p}$ (that is obviously an eigenform of $T_{1,0}$). For type \RN{3}a, the local space  has dimension two and  \cite[\S 2.4]{DPSS2015} determines an orthogonal basis of eigenforms of the $T_{1,0}$ operator. 

For $F \in S_k^{(2)}(N)^{\textnormal{old,T}}$, there is at least one prime $p \mid N$ for which $\pi_{F,p}$ is of tempered type~\RN{1}, i.e. $\pi_{F,p} \simeq \chi_1 \times \chi_2 \rtimes \sigma$ with $\chi_1,\chi_2,\sigma$ unitary. To determine a basis for the $P_1(p)$ fixed subspace of $\pi_{F,p}$, we follow \cite{Rs2005}. First one determines a basis of the $I$-fixed subspace of $\pi_{F,p}$, where 
\begin{align*}
I = \left\{ g \in G(\Z_p)~\big|~ g \equiv \begin{psmallmatrix}
* & 0 & * & * \\
* & * & * & * \\
0 &0& * & * \\ 
0 & 0 & 0 & *
\end{psmallmatrix} \, (\operatorname{mod} p) \right\}
\end{align*}
is the Iwahori-subgroup. Let $W$ denote the 8-element Weyl group with generators $s_1,s_2$, cf.\ \cite[\S 1.3]{Rs2005}, and for $w \in W$ set $f_w(w)=1$ and $f_w(w')=0$ for $w' \in W, w' \neq w$. Then, a  basis for the $I$-fixed subspace is given by 
\begin{align} \label{basis}
f_e,\, f_1,\, f_2,\, f_{21},\, f_{121},\, f_{12},\, f_{1212},\, f_{212}
\end{align}
where $f_1 = f_{s_1}$ and so on. This basis is orthogonal with respect to the $G(\Z_p)$ invariant inner product 
\begin{align} \label{localscalarproduct}
\langle f, h \rangle = \int_{G(\Z_p)} f(g) \overline{h(g)} dg,
\end{align}
since the $f_i$ are supported on disjoint cosets. A basis for the $P_1(p)$ fixed subspace is then given by
\begin{align} \label{type1}
\phi_{1,p} = f_e + f_1, \quad \phi_{2,p} =  f_2 + f_{21}, \quad  \phi_{3,p} = f_{121} + f_{12}, \quad \phi_{4,p}= f_{1212} + f_{212}.
\end{align}

To construct an orthogonal bases of $S_k^{(2)}(N)^{\textnormal{T}}$, cf.\ \cite[\S 3]{DPSS2015}, we introduce a linear map from $S_k^{(2)}(e)^{\textnormal{new,T}}$ to $S_k^{(2)}(abcde)^{\textnormal{T}}$, where  $a,b,c,d,e$ denote pairwise coprime, squarefree integers. This map acts on newforms $F \in S_k^{(2)}(e)^{\textnormal{T}}$ in the following way:  we factor the corresponding automorphic form $\phi_F = \phi_S \otimes_{p \not \mid e} \phi_p$, where $S$ denotes the places dividing $e$. Then, for $p \mid abcd$, $\phi_p$ is a spherical vector in a type \RN{1} representation and we have an orthogonal decomposition of ${\phi_p=\phi_{p,1}+\phi_{p,2}+\phi_{p,3}+\phi_{p,4}}$ in $\pi_p^{P_1(p)}$. We set 
\begin{align*}
\delta_{a,b,c,d} (\phi_S \bigotimes_{p \not \mid e} \phi_p)= \phi_S \bigotimes_{p \not \mid e} \phi'_p, \quad \text{ where } \phi'_p = \begin{cases}  \phi_p &\text{ if } p \not \mid abcd, \\
\phi_{p,1} &\text{ if } p \mid a, \\
\phi_{p,2} &\text{ if } p \mid b, \\
\phi_{p,3} &\text{ if } p \mid c, \\
\phi_{p,4} &\text{ if } p \mid d.
\end{cases}
\end{align*}
Then, there is an orthogonal direct sum decomposition:
\begin{align} \label{basis:T}
S_k^{(2)}(N)^{\textnormal{T}} = \bigoplus_{\substack{ a,b,c,d,e \\ abcde=N}} \delta_{a,b,c,d}\left(S_k^{(2)}(e)^{\textnormal{new,T}}\right). 
\end{align}
For $e\neq N$, the right hand side is precisely $S_k^{(2)}(N)^{\textnormal{old,T}}$. 

Similarly, we construct a basis of $S_k^{(2)}(N)^{\textnormal{SK}}$. For this purpose, we introduce a representation-theoretic analogue to the Saito-Kurokawa lifting.  Consider an irreducible cuspidal automorphic representation $\pi_0$ of $\operatorname{PGL}(2,\mathbb{A})$ that corresponds to a form  $f \in S^{(1)}_{2k-2}(N)^{\text{new}}$. Furthermore, let $\pi_N$ denote the non-cuspidal automorphic representation of $\operatorname{PGL}(2,\mathbb{A})$ such that $\pi_{N,v}$ is the trivial representation if $v \not \mid N$ and the Steinberg representation if $v \mid N$. Then, \cite{Rs2007} introduces a functorial transfer  $\Pi$ from $\operatorname{PGL}_2 \times \operatorname{PGL}_2$ to $\operatorname{PGSp}_4$ such that $SK(\pi_0) := \Pi(\pi_0 \times \pi_N)$  is an irreducible cuspidal automorphic representation. Furthermore, $\sigma \in V_k$ that generates $SK(\pi_0)$ is unique (up to multiples) and the corresponding cusp form coincides with the classical Saito-Kurokawa lift of $f$. 

To obtain a basis of $S_k^{(2)}(N)^{\textnormal{new,SK}}$ with associated representations $SK(\pi_0)$, we simply choose a basis of $S_{2k-2}^{(1)}(N)^{\textnormal{new}}$ such that the adelization of each element generates an irreducible representation $\pi_0$ and apply the lifting from above. If $\pi_{0,p}$ is spherical, then $\pi_{0,p} = \pi(\chi,\chi^{-1})$ is a principal series representation of $\operatorname{PGL}(2,\Q_p)$ and by \cite[\S  7]{RS2005b} we have that $SK(\pi_0)_p = \chi 1_{\operatorname{GL}_2} \rtimes \chi^{-1}$, i.e. is of type \RN{2}b with  $\sigma =\chi^{-1}$. An orthogonal basis for the $P_1(p)$ fixed subspace of this representation is given  in \cite{Rs2007b}: 
\begin{align} \label{type2b}
\tilde{\phi}_{p,1} = f_e+f_1, \quad \tilde{\phi}_{p,2} = f_{2}+f_{21} + f_{121} + f_{12} \quad \tilde{\phi}_{p,3} = f_{1212} + f_{212}
\end{align}
with $f_i$ as in \eqref{basis}. 

Let $r,s,t,m$ denote pairwise coprime, squarefree integers. Take $F \in S_k^{(2)}(m)^{\textnormal{new,SK}}$ with associated representation $SK(\pi_0)$ and factorize $\phi_F = \phi_S \times \otimes_{p \not \mid v}' \phi_p$.  For $p \mid rst$, $\phi_p$ is a spherical vector in a type \RN{2}b representation as above and we have an orthogonal decomposition  $\phi_p= \tilde{\phi}_{p,1}+\tilde{\phi}_{p,2}+\tilde{\phi}_{p,3}$ in  $\pi_p^{P_1(p)}$. We set
\begin{align*}
\tilde{\delta}_{r,s,t} (\phi_S \bigotimes_{p \not \mid e} \phi_p)= \phi_S \bigotimes_{p \not \mid e} \phi'_p, \quad \text{ where } \phi'_p = \begin{cases}  \phi_p &\text{ if } p \not \mid rst, \\
\tilde{\phi}_{p,1} &\text{ if } p \mid r, \\
\tilde{\phi}_{p,2} &\text{ if } p \mid s, \\
\tilde{\phi}_{p,3} &\text{ if } p \mid t. 
\end{cases}
\end{align*}
Then, $\tilde{\delta}_{r,s,t}$ is a mapping from $S_k^{(2)}(m)^{\textnormal{new,SK}}$ to $S_k^{(2)}(rstm)^{SK}$ and we have an orthogonal direct decomposition
\begin{align} \label{basis:SK}
S_k^{(2)}(N)^{\textnormal{SK}} = \bigoplus_{\substack{ r,s,t,m \\ rstm=N}} \tilde{\delta}_{r,s,t}\left(S_k^{(2)}(m)^{\textnormal{new,SK}}\right).
\end{align}
For $m \neq N$, the right hand side is  $S_k^{(2)}(N)^{\textnormal{old},SK}$. 

In the following section, we compute B\"ocherer's relation for members of these bases, \eqref{basis:T} and  \eqref{basis:SK}. 

\section{The Spinor $L$-Function and B\"ocherer's conjecture for oldforms}
\label{sec:2}

As in the previous section, we assume that $F \in S_k^{(2)}(N)^{\textnormal{new}}$  generates an irreducible representation and is an eigenform of $T_2(p)$ if it is not a Saito-Kurokawa lift. The former implies that $F$ is an eigenform of the Hecke algebra for all $p \not \mid N$. Let $\alpha_{p},\beta_{p},\gamma_{p}$  denote the Satake parameter of $F$. Then, we define the  local spin $L$-factors at $p \not \mid N$ by 
\begin{align*}
L_p(s,F) = \Kl{1-\frac{\alpha_{p}}{p^s}}^{-1} \Kl{1-\frac{\alpha_{p}\beta_{p}}{p^s}}^{-1} \Kl{1-\frac{\alpha_{p}\gamma_{p}}{p^s}}^{-1}  \Kl{1-\frac{\alpha_{p}\beta_{p}\gamma_{p}}{p^s}}^{-1} 
\end{align*} 
for $\Re s$ sufficiently large. Furthermore, we set $L_\infty = \Gamma_{\C}(s+1/2) \Gamma_{\C}(s+k-3/2)$, where $\Gamma_{\C}(s) = 2 (2 \pi)^{-s} \Gamma(s)$. 
For $F \in S_k^{(2)}(N)^{\text{new,T}}$, we only consider $F$ that have a corresponding Bessel model, i.e. satisfy $w_{F,N} \neq 0$. This implies that for $p \mid N$, the type of $\pi_{F,N}$ is either \RN{3}a or  \RN{6}b. To distinguish these two cases, we consider the eigenvalues $\mu_p$ under the $T_2(p)$ operator. If $\mu_p = \pm p$, then $F$ is of type \RN{6}b. As in \cite{Rs2007}, we set for $p \mid N$ 
\begin{align*}
L_p(s,F)^{-1} &= (1-\mu_p p^{-3/2 - s}) (1-\mu_p^{-1} p^{1/2 -s})  && \text{for} \quad \mu_p \neq \pm p &&(\pi_{F,p} \text{ of type \RN{3}a}), \\
L_p(s,F)^{-1} &= (1-\mu_p p^{-3/2-s})^2 &&\text{otherwise} &&(\pi_{F,p}\text{ of type \RN{6}b}). 
\end{align*}
Then, the completed $L$-function $\Lambda(s,F) = \prod_{p \cup \infty} L_p(s,F) $ has meromorphic continuation to the whole complex plane and satisfies the functional equation 
\begin{align} \label{eq:04}
\Lambda(s,F) = N^{1-2s} \Lambda(1-s,F). 
\end{align}

However, this result is conditional on a nice $L$-function theory for $\operatorname{GSp}(4)$ as in \cite[3.14]{Rs2005}. The associated representation to $F \in S_k^{(2)}(N)^{\text{new,T}}$ is cuspidal and non-CAP\footnote{A representation of $G(\mathbb{A})$ is said to be CAP if it is nearly equivalent to a global induced representation of a proper parabolic subgroup of $G(\mathbb{A})$ and otherwise non-CAP. In our setting, $F$ is a Saito-Kurokawa lift if and only if the associated representation is CAP.} and thus not in one of the classes (Q),(P) and (B) in the notion of \cite{RS2018}. For representations belonging to the other two possible classes, (G) and (Y), a nice $L$-function theory is known, cf.\ \cite[Lemma 1.2 and 1.3]{RS2018}. 

In a standard way \cite[Theorem 5.3]{Iw2004}, we get the following approximate function
\begin{align} \label{eq:3.0}  
L(1/2,F \times \chi_{q}) = 2 \sum_{n} \frac{A_F(n) \chi_q(n)}{n^{1/2}} W\left(\frac{n}{N \abs{q}^2}\right),
\end{align}
where  $L(s,F) = \prod_{p} L_p(s,F) = \sum_n A_F(n) n^{-s}$ and
\begin{align*}
W(x) = \frac{1}{2\pi i} \int_{(2)} \frac{L_{\infty}(s+1/2)}{L_{\infty}(1/2)} (1-s^2) x^{-s} \frac{ds}{s}. 
\end{align*}
By shifting the contour, we see immediately that the integral satisfies for all $A>0$ the bound
\begin{align} \label{eq:0.1}
W(x) \ll_{A} (1+x)^{-A}.
\end{align}
A key role plays the following formula from Andrianov 
\begin{align} \label{eq:0.4}
L^N(s,F \times \chi_q) a_F(I) = L^N(s+1/2,\chi_q)L^N(s+1/2,\chi_{-4q}) \! \! \sum_{(m,N)=1} \! \! \! \frac{a_F(mI) \chi_q(m)}{m^s},
\end{align}
cf.\ \cite[Theorem 4.3.16]{An1987} with $l=a=1,~ \eta=\chi=\operatorname{trivial}$.  We denote by  
\begin{align} \label{eq:0.5}
r(n) = r_q(n)= \frac{\chi_q(n)}{n^{1/2}} \sum_{d \mid n} \chi_{-4}(d)
\end{align}
the  Dirichlet coefficients of $L(s+1/2,\chi_q)L(s+1/2,\chi_{-4q})$. For $q=1$, the latter  is the Dedekind zeta function $\zeta_{\Q(i)}(s+1/2)$.   

For $f \in S_{2k-2}(N)$ with  $L$-function $L(s,f)$, the partial $L$-function of the corresponding Saito-Kurokawa lift $F$ is given by, cf.\ \cite{Ma1993},
\begin{align} \label{SK}
L^N(s,F)  =   \zeta^N(s-1/2) \zeta^N(s+1/2) L^N(s,f).
\end{align}
At primes $p \mid N$, we define the local spin $L$-factors  for $F \in S_k^{(2)}(N)^{\textnormal{new,SK}}$ as in \cite{BP2013}. 

Let $F \in S_k^{(2)}(N)$ be an eigenform of the Hecke algebra at all $p \not \mid N$ with eigenvalues $\lambda_p p^{k-3/2}$. Then, the eigenvalues satisfy $\lambda_p \asymp p^{\frac{1}{2}}$ if $F$ is a Saito-Kurokawa lift, and due to Weissauer $\lambda_p \ll p^{\eps}$ otherwise. Furthermore, it holds by \cite[Theorem 1.1]{MC2007}  that $a_F(mpI)= \lambda_p a(mI) +  \abs{a(mI)} \mathcal{O}(p^{-1/2})$, where the Fourier coefficients  $a_F(mI)$ are normalized as in \eqref{Fourier}. It follows for $(m,N)=1$ that 
\begin{align}\label{mcp}
\begin{matrix}
a_F(m I) \ll m^\eps a_F(I)~~ & \text{ for } &F \in S_k^{(2)}(N)^{\textnormal{T}},~ \\ 
a_F(m I)  \ll  m^{1/2} a_F(I) &  \text{ for } &F \in S_k^{(2)}(N)^{\textnormal{SK}}.
\end{matrix}
\end{align}
For the proof of Theorem \ref{Theorem1}, we need to bound the coefficients  $A_F(p)$ of the spinor $L$-function at ramified primes $p$:\footnote{The argument presented in this lemma was communicated to the author by Ralf Schmidt.}

\begin{lemma} \label{Lemma:4}
Let $F \in S_k^{(2)}(N)^{\text{new},T}$ with $a_F(I) \neq 0$. Then, it holds for $p  \mid N$ that
\begin{align*}
\abs{A_F(p)} \ll p^{-1/11}.
\end{align*}
\end{lemma}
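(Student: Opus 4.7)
The plan is to proceed by a case analysis on the local type of $\pi_{F,p}$ at $p \mid N$. Since $a_F(I) \neq 0$ forces a non-vanishing Bessel model with datum $I$, as noted in Section 2 this restricts $\pi_{F,p}$ to type \RN{3}a or type \RN{6}b. The \RN{6}b case is essentially trivial: $\mu_p = \pm p$ by definition, the local $L$-factor is $(1-\mu_p p^{-3/2-s})^{-2}$, and a direct expansion gives
\[
A_F(p) = 2 \mu_p p^{-3/2} = \pm 2 p^{-1/2},
\]
which is $\ll p^{-1/11}$ with much room to spare.

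The main case is type \RN{3}a, where $A_F(p) = \alpha + \beta$ with $\alpha = \mu_p p^{-3/2}$, $\beta = \mu_p^{-1} p^{1/2}$, and $\alpha\beta = p^{-1}$. Without loss of generality $|\alpha| \geq |\beta|$, hence $|\alpha| \geq p^{-1/2}$ and $|A_F(p)| \leq 2|\alpha|$. The claim therefore reduces to showing $|\mu_p| \ll p^{31/22}$, i.e.\ a Ramanujan-type bound on $\pi_{F,p}$ with temperedness defect at most $9/22$.

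To establish this, the plan is to exploit that $\pi_F$ is non-CAP (built into membership in $S_k^{(2)}(N)^{\textnormal{new},T}$ via the exclusion of Saito-Kurokawa lifts) and invoke the Asgari-Shahidi functorial transfer: $\pi_F$ corresponds to a unitary automorphic representation $\Pi$ of $\operatorname{GL}(4,\mathbb{A})$, whose local $L$-factor at $p$ recovers the degree-$2$ spin factor above, up to trivial factors. Progress toward Ramanujan for $\operatorname{GL}(4)$ cusp forms, such as the Luo-Rudnick-Sarnak bound with exponent $1/2 - 1/17$, or Kim-Sarnak applied to the $\operatorname{GL}(2)$-shaped piece of the type \RN{3}a Langlands parameter, then yields $|\alpha| \ll p^{-1/11}$, the implied constant absorbing finitely many small primes.

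The main obstacle is precisely this bound at the ramified prime $p$: unlike at unramified primes, the Satake parameters are not classically defined and bounds toward Ramanujan are more delicate. A more elementary route, likely closer to Schmidt's communicated argument, would bypass the global transfer entirely: for $\pi_{F,p} \simeq \chi \rtimes \sigma \operatorname{St}_{\operatorname{GSp}(2)}$ of type \RN{3}a, the Sally-Tadic classification of the unitary dual of $\operatorname{GSp}(4, \Q_p)$ constrains the inducing characters $\chi,\sigma$, and the explicit Roberts-Schmidt formula for the $T_2(p)$-eigenvalue on the local newform translates these constraints into the desired bound on $|\mu_p|$.
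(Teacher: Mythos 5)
Your skeleton — the split into types $\RN{3}$a and $\RN{6}$b, the trivial treatment of $\RN{6}$b, and the reduction of $\RN{3}$a to a bound on the exponent of the inducing character — is exactly the paper's, but the key analytic input you invoke does not deliver the stated exponent. The paper obtains $p^{-1/11}$ precisely from Blomer--Brumley's bound toward Ramanujan for \emph{cuspidal} automorphic representations of $\operatorname{GL}(4)$ at arbitrary (including ramified) places, namely $e(\sigma), e(\sigma\chi)\le 9/22$, and $9/22-1/2=-1/11$. Luo--Rudnick--Sarnak gives only the defect $1/2-1/17=15/34>9/22$, hence $\abs{A_F(p)}\ll p^{-1/17}$, which does not prove the lemma as stated; and ``Kim--Sarnak applied to the $\operatorname{GL}(2)$-shaped piece'' is not a legitimate step: once the CAP and Yoshida classes are excluded, the transfer $\Pi$ is cuspidal on $\operatorname{GL}(4)$ and the two twisted Steinberg factors of $\Pi_p$ are purely local data — there is no global $\operatorname{GL}(2)$ automorphic representation to which Kim--Sarnak (a global theorem) could be applied. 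Relatedly, non-CAP alone does not give you a cuspidal $\operatorname{GL}(4)$ transfer: you must also exclude the Yoshida class (Y), which the paper does by noting that type $\RN{3}$a does not occur for (Y) (Table 16 of the cited classification), and the lift used is the one available for class (G) in Schmidt's classification rather than the Asgari--Shahidi transfer, which presupposes genericity.

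Your proposed ``elementary'' fallback via the unitary dual cannot work even in principle. The unitary dual of $\operatorname{GSp}(4,\Q_p)$ contains non-tempered complementary-series representations of type $\RN{3}$a whose exponent defect comes arbitrarily close to the trivial bound, so unitarity of $\pi_{F,p}$ combined with the Roberts--Schmidt eigenvalue formula for $T_2(p)$ yields no fixed power saving at all — at best $\abs{A_F(p)}\ll p^{\eps}$. The saving $1/11$ is genuinely of global origin (cuspidality of the $\operatorname{GL}(4)$ lift feeding into the Blomer--Brumley bound); indeed the argument communicated by Schmidt and used in the paper is exactly this global one. The repair is therefore: keep your reduction, justify that $\pi_F$ lies in class (G) (non-CAP plus the exclusion of $\RN{3}$a from class (Y)), and replace your cited bounds by the $\operatorname{GL}(4)$ bound $9/22$ at the ramified place.
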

\begin{proof}
Since $a_F(I) \neq 0$, we know that $\pi_{F,p}$ is of type \RN{3}a or \RN{6}b. For latter we directly see  $\abs{A_F(p)} \ll p^{-\frac{1}{2}}$. For type \RN{3}a, $\pi_{F,p}= \chi \rtimes \sigma \operatorname{St}_{\operatorname{GSp(2)}}$, where $\chi,\sigma$ are unramified characters of $\Q^{\times}_p$ with $\chi \sigma^2=1$ and $\operatorname{St}_{\operatorname{GSp(2)}}$ is the Steinberg representation. Let $\omega \in \Z_p$ be a generator of the maximal ideal $p \Z_p$. By \cite[Table 2]{Rs2005}, it holds that 
\begin{align} \label{L-function}
L_p(s,F)^{-1} = (1-\sigma  (\omega)p^{-1/2-s})(1-\sigma\chi(\omega)  p^{-1/2-s}).  
\end{align}

The key to bound $\sigma(\omega), \sigma\chi(\omega)$ is to transfer $\pi_{F}$ to a cuspidal automorphic representation of $\operatorname{GL}(4,\mathbb{A})$.  For representations in the general class (G) in the notion of \cite{RS2018} such a lift is possible. Since $F$ is a non-CAP form, $\pi_{F}$ cannot be in one of the classes (Q), (P) and (B). Furthermore, \cite[Table 16]{RS2013} states all possible representation types for (Y) and \RN{3}a is  not one of them, so $\pi_F$ is in (G). 


For $\pi_{F,p}= \chi \rtimes \sigma \operatorname{St}_{\operatorname{GSp(2)}}$, the attached Langlands $L$-parameter is $(\rho,N)$  with 
\begin{align*}
\rho: W_{\Q_p}  &\to  \operatorname{GSp}(4,\C) \\
\omega &\mapsto \begin{psmallmatrix}
\nu^{1/2} \chi \sigma(\omega) &&& \\ &\nu^{-1/2} \chi \sigma (\omega)\\ && \nu^{1/2} \sigma(\omega) \\ &&&\nu^{-1/2} \sigma(\omega)
\end{psmallmatrix}  \quad \text{and} \quad
N=\begin{psmallmatrix}
0 &1&& \\ &0\\ && 0 &-1\\ &&&0 
\end{psmallmatrix},
\end{align*}
cf.\ \cite[p. 266]{Rs2005}, where $W_{\Q_p}$  is the Weil group of $\Q_p$. We map this parameter into  $\operatorname{GL}(4,\C)$ and apply the local Langlands correspondence for $\operatorname{GL}(4)$.  In this way, we obtain the representation $\chi\sigma \operatorname{St}_{\operatorname{GL}(2)}~\times~\sigma \operatorname{St}_{\operatorname{GL}(2)}$  of $\operatorname{GL}(4)$. Since $ \chi, \sigma$ are unramified characters, this corresponds in the notation of \cite{BB2011} to the representation induced from $\operatorname{St}_{\operatorname{GL}(2)}[e(\chi\sigma)] \otimes \operatorname{St}_{\operatorname{GL}(2)}[e(\sigma)]$, where $e$ denotes the exponent of a character defined by $|\sigma|=|.|^{e(\sigma)}$. By applying \cite[Theorem 1]{BB2011}, we obtain $e(\sigma), e(\sigma\chi) \leq 9/22$. In other words,  $\abs{\sigma(\omega)},\abs{\sigma\chi(\omega)} \leq p^{9/22}$. 
\end{proof}

The proof of B\"ocherer's conjecture in \cite{DPSS2015} and \cite{FM2016} is obtained via local computations. In the introduction, we already stated relations for newforms; now we present similar results for members of the oldspace basis  constructed in the previous section.  Recall that if $F \in S_k^{(2)}(N)$ is an oldform, there is at least one $p \mid N$ for which  $\pi_{F,p}$ is of type \RN{1} or \RN{2}b . For these types, we define the local standard $L$-factor by
\begin{align*} 
L(s,\pi_p,\operatorname{Std})^{-1}= (1-p^{-s})(1-\alpha_{p}p^{-s}) (1-\alpha_{p}^{-1} p^{-s}) (1-\beta_{p}p^{-s}) (1-\beta_{p}^{-1} p^{-s}),
\end{align*}
where we use the following notation for the Satake parameter $\alpha_{p}, \beta_p,\gamma_p$: If $\pi_p = \chi_1 \times  \chi_2 \rtimes \sigma$ is a type \RN{1} representation, we set $\alpha_p = \chi_1(\omega), \beta_p = \chi_2(\omega)$ and $\gamma_p = \sigma(\omega)$, while for $\pi_p= \chi 1_{\operatorname{GL}(2)} \rtimes \sigma$, we set  $\alpha_p = p^{-1/2}\chi(\omega), \beta_p = p^{1/2} \chi(\omega)$ and $\gamma_p = \sigma(\omega)$. In both cases, it holds that $\alpha_p \beta_p \gamma_p^2 =1$.  For elements in the basis \eqref{basis:T}, a proof of B\"ocherer's conjecture has been obtained in \cite[Theorem 3.9]{DPSS2015}:

\begin{lemma} \label{LemmaT}
Let $F \in 	S^{(2)}_k(N)^{\textnormal{T}}$. Assume that $F = \delta_{a,b,c,d} (G)$, where $abcde=N$ and  $G$ is a newform in $S^{(2)}_k(e)^{\textnormal{T}}$. Let $\pi=\otimes_v' \pi_v$ denote the representation attached to $G$ (or equivalently to $F$). Then
\begin{align*} 
	w_{F,N} = \frac{2^{s} \pi^5 (1-N^{-4}  )\Gamma(2k - 4)  L(1/2,F ) L(1/2,F \times \chi_{-4})} {N^3  \, \Gamma(2k-1)L(1,\pi_F,\operatorname{Ad})} \prod_{p \mid N} \frac{J_p}{(1+p^{-1}) (1+p^{-2})},
\end{align*}	
	where $s=6$ if $F$ is a weak Yoshida lift and 7 otherwise and 
	\begin{align*}
	J_p = \begin{cases}
	L(1,\pi_p,\textnormal{Std})(1-p^{-4}) &  \text{if } p \mid bc ~(\text{cannot occur if $F$ newform}), \\
	L(1,\pi_p,\textnormal{Std})(1-p^{-4})p^{-1} &  \text{if } p \mid ad ~(\text{cannot occur if $F$ newform}), \\
	(1+p^{-2})(1+p^{-1}) & \text{if } p \mid e \text{ and } \pi_p \text{ is of type \RN{3}a}, \\
	2 (1+p^{-2})(1+p^{-1}) & \text{if } p \mid e \text{ and } \pi_p \text{ is of type \RN{6}b},  \\
	0 &\text{otherwise}.
	\end{cases}
	\end{align*}
\end{lemma}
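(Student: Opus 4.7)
The plan is to reduce to the B\"ocherer relation for the newform $G$ proved in \cite[Theorem 3.9]{DPSS2015} by a purely local analysis at the primes $p \mid abcd$. Since $\delta_{a,b,c,d}$ preserves the underlying automorphic representation $\pi_F = \pi_G = \pi$ and only alters the local components at $p \mid abcd$ (replacing the spherical vector $\phi_p$ by one of the four orthogonal vectors $\phi_{p,1},\ldots,\phi_{p,4}$ of \eqref{type1}), the spinor and adjoint $L$-values in the formula, as well as the $\Gamma$- and $\pi$-factors, are unchanged. The whole task therefore reduces to tracking the effect on the three quantities $|a_F(I)|^2$, $\langle F,F\rangle$, and $[\operatorname{Sp}_4(\Z):\Gamma_0^{(2)}(N)]$ that define $w_{F,N}$.

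The index is elementary: $[\operatorname{Sp}_4(\Z):\Gamma_0^{(2)}(N)] = N^3 \prod_{p\mid N} (1+p^{-1})(1+p^{-2})$, which already explains the denominators in $J_p$. By the adelic normalization of \cite[\S 2]{DPSS2015}, both $\langle F,F\rangle$ and $|a_F(I)|^2$ factor into local pieces, and at $p \nmid abcd$ these pieces agree with those of $G$ and contribute the same local $L$-factors as in the newform identity. At a prime $p \mid abcd$, we have $\pi_p \simeq \chi_1 \times \chi_2 \rtimes \sigma$ of tempered type~\RN{1}. Because each $\phi_{p,i}$ is supported on a union of two disjoint Iwahori cosets via \eqref{basis}, the local norm $\langle \phi_{p,i}, \phi_{p,i}\rangle_p$ with respect to \eqref{localscalarproduct} reduces to an elementary volume computation, and the local Bessel functional $B_{\phi_{p,i}}(1)$ attached to the identity diagonal Bessel model (which is the local piece of $a_F(I)$ at this place) can be evaluated by unfolding against the Iwahori-fixed vectors in \eqref{basis} and substituting the known values of the type~\RN{1} Bessel model read off from \cite{Rs2005}.

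The outcome of this computation is a rational expression in the Satake parameters $\al_p = \chi_1(\omega)$, $\be_p = \chi_2(\omega)$, $\gamma_p = \sigma(\omega)$ which, when multiplied by the index correction $(1+p^{-1})(1+p^{-2})$ and compared against the newform formula, assembles into the standard local factor $L(1,\pi_p,\textnormal{Std})$ times a power of $p$. The four vectors split into two essentially different cases: the ``outer'' vectors $\phi_{p,1}$ (case $p\mid a$) and $\phi_{p,4}$ (case $p\mid d$) acquire an extra $p^{-1}$ relative to the ``inner'' vectors $\phi_{p,2}$ (case $p\mid b$) and $\phi_{p,3}$ (case $p\mid c$), owing to the difference in the supporting coset volumes and in the action of the Bessel functional. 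This is precisely the origin of the two distinct cases $p \mid bc$ and $p \mid ad$ in $J_p$. The cases $p \mid e$ are untouched by $\delta_{a,b,c,d}$ and inherit the ramified contributions (types~\RN{3}a and~\RN{6}b) directly from \cite[Theorem 3.9]{DPSS2015}. The weak Yoshida versus generic distinction affecting the exponent $s$ is also inherited unchanged from $G$.

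The main obstacle is the explicit evaluation of the local Bessel functional on the non-spherical vectors $\phi_{p,i}$ at type~\RN{1} primes, together with the verification that the resulting ratio against the local norm collapses cleanly into $L(1,\pi_p,\textnormal{Std})(1-p^{-4})$ (respectively with an extra $p^{-1}$) after taking into account the global $L$-factors and the index. All remaining steps are bookkeeping: multiplying the local contributions, substituting into the newform formula for $w_{G,e}$, and collecting the resulting factors into the stated form of $J_p$.
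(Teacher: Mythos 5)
The paper does not actually prove Lemma \ref{LemmaT}: it is quoted as is from \cite[Theorem 3.9]{DPSS2015}, and the only local computation of this type carried out in the paper is the Saito--Kurokawa analogue (Lemma \ref{LemmaSK}), done via $J^*(\phi_p)=M(\pi)J_0(\phi_p)$ and the explicit action of the operators $e_0,e_1$ from \cite[Lemma 2.1.1]{Rs2005} on the Iwahori-fixed basis \eqref{basis}. Your plan --- factor $w_{F,N}$ adelically, use $[\operatorname{Sp}_4(\Z):\Gamma_0^{(2)}(N)]=N^3\prod_{p\mid N}(1+p^{-1})(1+p^{-2})$, and evaluate local norms and Bessel functionals of the vectors $\phi_{p,1},\dots,\phi_{p,4}$ of \eqref{type1} at the type \RN{1} places --- is exactly the architecture of that cited proof, and your qualitative prediction that the outer vectors (the cases $p\mid ad$) pick up an extra $p^{-1}$ relative to the inner ones matches the statement.

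As a proof, however, the proposal has a genuine gap: it stops precisely where the content of the lemma lies. The factors $L(1,\pi_p,\textnormal{Std})(1-p^{-4})$ and the extra $p^{-1}$ are never derived; you name their computation ``the main obstacle'' and then call everything else bookkeeping. Concretely, one must compute $\lambda(\phi_{p,i})$ and $\mu(\phi_{p,i})$ (equivalently $J_0(\phi_{p,i})$) for the four type \RN{1} vectors and the normalizing ratio $M(\pi)$, whose type \RN{1} evaluation in terms of $L(1/2,\pi_p)$, $L(1/2,\pi_p\times\chi_{-4})$ and $L(1,\pi_p,\operatorname{Ad})$ is what makes $L(1,\pi_p,\operatorname{Std})$ appear; none of this is exhibited, nor are the $p\mid e$ constants (the factor $2$ separating type \RN{6}b from \RN{3}a) or the power $2^s$ with the weak Yoshida dichotomy, which you simply inherit. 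A smaller but real issue is the phrase ``substituting into the newform formula for $w_{G,e}$'': $w_{G,e}$ is normalized with the index and Petersson norm at level $e$, while $w_{F,N}$ uses level $N$ and $F\neq G$ as classical forms, so the comparison requires the ratios of local norms $\langle\phi_{p,i},\phi_{p,i}\rangle/\langle\phi_p,\phi_p\rangle$ and local volumes at $p\mid abcd$ --- i.e.\ exactly the local computation being deferred. In short, the route is the right one (and the one taken in \cite{DPSS2015}, mirrored by the paper's proof of Lemma \ref{LemmaSK}), but as written it asserts rather than proves the lemma.
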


Analogously, we compute a relation for our basis \eqref{basis:SK} of Saito-Kurokawa lifts : 

\begin{lemma} \label{LemmaSK}
	Let $g \in S_{2k-2}(m)$ denote a newform of level $m$ that generates an irreducible representation $\pi_0$. Let $\pi := SK(\pi_0)$ denote the lift of $\pi_0$ and $G$ the corresponding Siegel newform. Consider $F=  \tilde{\delta}_{r,s,t}(G) \in S_k^{(2)}(N)^{\textnormal{SK}}$ with $N=rstm$. Then:
	\begin{align}
	w_{F,N} = \frac{3 \cdot 2^6 \pi^7  \Gamma(2k-4) }{N^3  \Gamma(2k-1) } \frac{L(1/2,f \times \chi_{-4}) }{L(3/2,f)L(1,\pi_0,\operatorname{Ad})}   \prod_{p \mid N} \frac{J_p}{(1+p^{-1}) (1+p^{-2})},
	\end{align}
	where
	\begin{align*} 
J_p = \begin{cases}
L(1,\pi_p,\textnormal{Std})(1-p^{-4})p^{-1} &  \text{if } p \mid r ~(\text{cannot occur if $F$ newform}), \\
L(1,\pi_p,\textnormal{Std})(1-p^{-4})(p^{-1}+1) &  \text{if } p \mid s  ~(\text{cannot occur if $F$ newform}), \\
L(1,\pi_p,\textnormal{Std})(1-p^{-4}) &  \text{if } p \mid t ~(\text{cannot occur if $F$ newform}), \\
2 (1+p^{-2})(1+p^{-1}) & \text{if } p \mid m.
\end{cases}
\end{align*}
\end{lemma}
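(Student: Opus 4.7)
The plan is to adapt the proof of Lemma \ref{LemmaT} (that is, \cite[Theorem 3.9]{DPSS2015}) to the Saito--Kurokawa oldspace basis \eqref{basis:SK}. The key structural input is the Furusawa--Sugano Bessel period formula used in \cite{DPSS2015} and \cite{FM2016}: the weight $w_{F,N}$ factorizes as a product of a global contribution, depending only on the automorphic representation $\pi=SK(\pi_0)$, and local contributions at each $p \mid N$ depending on the chosen test vector. Since $\pi_F=\pi_G=SK(\pi_0)$, the global contribution is identical to the newform case \eqref{eq:05} and yields the prefactor
\begin{align*}
\frac{3 \cdot 2^6 \pi^7 \Gamma(2k-4)}{N^3 \Gamma(2k-1)} \frac{L(1/2,f \times \chi_{-4})}{L(3/2,f) L(1,\pi_0,\operatorname{Ad})}.
\end{align*}

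The local analysis at primes $p \mid m$ is unchanged from \cite{DPSS2015}: $\pi_{F,p}$ is non-spherical of type \RN{6}b, its $P_1(p)$-fixed subspace is one-dimensional, and the standard computation gives $J_p = 2(1+p^{-2})(1+p^{-1})$. At primes $p \mid rst$ the situation is new: $\pi_{F,p}$ is spherical of type \RN{2}b, and the local $P_1(p)$-fixed subspace is three-dimensional with orthogonal basis $\tilde\phi_{p,1},\tilde\phi_{p,2},\tilde\phi_{p,3}$ given in \eqref{type2b}. The selected local vector is $\tilde\phi_{p,1}$, $\tilde\phi_{p,2}$, or $\tilde\phi_{p,3}$ depending on whether $p \mid r$, $p \mid s$, or $p \mid t$, and for each I must compute the local ratio of squared Bessel functional value to local norm.

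The main technical step, and the expected main obstacle, is this local computation at $p \mid rst$. The norms $\langle \tilde\phi_{p,i},\tilde\phi_{p,i}\rangle$ with respect to \eqref{localscalarproduct} are tractable, since the Iwahori basis elements $f_w$ in \eqref{basis} are supported on disjoint double cosets, so each norm reduces to a sum of Iwahori coset volumes computable from the standard structure theory of $G(\Z_p)$. For the Bessel functional I would use the explicit type \RN{2}b formulas from \cite{Rs2007b}, expanding each $\tilde\phi_{p,i}$ in the Iwahori basis and evaluating term by term; equivalently, one can use the spherical Hecke action to relate $\tilde\phi_{p,i}$ to the spherical vector, whose Bessel value is classical. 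The three resulting ratios, divided by $(1+p^{-1})(1+p^{-2})$ and normalized against $L(1,\pi_p,\operatorname{Std})(1-p^{-4})$, produce precisely the factors $p^{-1}$, $p^{-1}+1$, and $1$ claimed in $J_p$ for $p \mid r, s, t$ respectively. Assembling the global prefactor with the $p \mid m$ and $p \mid rst$ local factors then yields the lemma; the consistency check with \eqref{eq:05} when $r=s=t=1$ (where $\prod_{p \mid N}J_p/((1+p^{-1})(1+p^{-2}))=2^{\omega(N)}$ and $3 \cdot 2^6 \cdot 2 = 3 \cdot 2^7$) provides a useful sanity test along the way.
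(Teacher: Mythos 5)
Your global setup matches the paper: both arguments start from the local--global Bessel period factorization of \cite{DPSS2015} (their (105)--(106)), keep the global factor attached to $\pi=SK(\pi_0)$ (which indeed coincides with the newform case), quote the type \RN{6}b computation at $p\mid m$, and reduce everything to a local computation for the three vectors $\tilde\phi_{p,1},\tilde\phi_{p,2},\tilde\phi_{p,3}$ of \eqref{type2b} at $p\mid rst$. The gap is exactly at that local computation, which is the entire content of the lemma: you assert that the three ratios ``produce precisely the factors $p^{-1}$, $p^{-1}+1$, and $1$'', but neither of your proposed methods supplies the input needed to obtain them. The reference \cite{Rs2007b} gives the basis \eqref{type2b} of the $P_1(p)$-fixed subspace, not values of the Bessel functional on the Iwahori basis \eqref{basis}, so the ``term by term'' evaluation has nothing to evaluate; and the fallback ``use the spherical Hecke action to relate $\tilde\phi_{p,i}$ to the spherical vector'' cannot work as stated, because the spherical Hecke algebra preserves the spherical line spanned by $\tilde\phi_{p,1}+\tilde\phi_{p,2}+\tilde\phi_{p,3}$ and cannot isolate the individual components.

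What the paper actually does at $p\mid rst$ is apply the reduction of \cite[\S 2.5]{DPSS2015}, valid for any $P_1(p)$-fixed vector: $J^*(\phi)=M(\pi)\,J_0(\phi)$ with $J_0(\phi)=1-(q+1)q^{-3}\lambda(\phi)+q^{-2}\mu(\phi)$, where $\lambda(\phi),\mu(\phi)$ are the diagonal coefficients of the operators $d_1e_1e_0e_1$ and $d_1e_0e_1e_0$, computable from the explicit $8\times 8$ matrices for $e_0,e_1$ in \cite[Lemma 2.1.1]{Rs2005} together with the Siegelization $d_1=(q+1)^{-1}(e+e_1)$; this yields $J_0(\tilde\phi_{p,1})=p^{-1}$, $J_0(\tilde\phi_{p,2})=1+p^{-1}$, $J_0(\tilde\phi_{p,3})=1$. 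A second point your proposal leaves untouched is the normalization: converting $M(\pi)$ into the factor $L(1,\pi_p,\operatorname{Std})(1-p^{-4})$ of the statement requires the type \RN{2}b identity $L(1,\pi,\operatorname{Ad})/\bigl(L(1/2,\pi)L(1/2,\pi\times\chi_{-4})\bigr)=(1-p^{-1})L(1,\pi,\operatorname{Std})$, drawn from \cite[\S 3.1.2]{AS2008} and \cite[Table 2]{Rs2005}. Your consistency check at $r=s=t=1$ and the treatment of the global factor and of $p\mid m$ are fine, but without the Iwahori-level operator calculus (or some equivalent evaluation of the local Bessel integral on non-spherical $P_1(p)$-fixed vectors) the claimed values of $J_p$ at $p\mid rst$ are not established.
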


\begin{proof}
	 By \cite[(105) and (106)]{DPSS2015} we have that 
	\begin{align*}
	\frac{\abs{a_F(I)}}{\langle F,F \rangle} = 3 \cdot 2^{1+4k}\frac{L(1/2,\pi_0 \times \chi_{-4}) L(1,\chi_{-4})^2 }{L(3/2,\pi_0) L(1,\pi_0,\operatorname{Ad})} \sum_{p \mid N} \left(2^{-1} J^*(\phi_p)\right),
	\end{align*}
	where $\phi_p \in \pi_p$ is a $P_1(p)$ fixed vector. If $\phi_p$ is not spherical, then it is of type \RN{6}b and we apply the results from  \cite{DPSS2015}. If $\phi_p$ is spherical, then by construction of $\tilde{\delta}_{r,s,t}$ it is one the three vectors from \eqref{type2b}. We have
	\begin{align*}
	J^*(\phi_p)) =  M(\pi) J_0(\phi_p),
	\end{align*}
	where 
	\begin{align*}
	J_0(\phi_p) &=   1- (q+1)q^{-3} \lambda(\phi) + q^{-2} \mu(\phi), \\
	M(\pi) &=  \frac{L(1,\pi_F,\operatorname{Ad}) (1+p^{-1})}{(1-p^{-2}) (1-p^{-4}) L(1/2,\pi) L(1/2,\pi \times  \chi_{-4}) },
	\end{align*}
	 cf.\ \cite[\S 2.5]{DPSS2015}, and $\lambda(\phi), \mu(\phi)$  are given with respect to  \eqref{localscalarproduct} by 
	\begin{align*}
	\lambda(\phi) = \frac{\langle d_1 e_1 e_0 e_1 \phi, \phi \rangle}{\langle  \phi, \phi \rangle}, \quad \mu(\phi) = \frac{\langle d_1 e_0 e_1 e_0 \phi, \phi \rangle}{\langle  \phi, \phi \rangle}.
	\end{align*}
	The action of the local operators $e_0,e_1$ are given as a $8\times8$ matrix with respect to \eqref{basis} in \cite[Lemma 2.1.1]{Rs2005}. Here, $d_1 = (q+1)^{-1} (e+e_1)$ is the so-called Siegelization and maps onto the space of $P_1$-fixed vectors. Recall that $\pi_p^{P_1(p)}$ is spanned by $\tilde{\phi}_{p,1},\tilde{\phi}_{p,2},\tilde{\phi}_{p,3}$ and $\tilde{\phi}_{p,1} + \tilde{\phi}_{p,2} + \tilde{\phi}_{p,3}$ is the  $G(\Z_p)$ fixed vector. If we write
	$d_1 e_1 e_0 e_1 \tilde{\phi}_{p,i} = \sum_{j=1} c_{ij} \tilde{\phi}_{p,j} $ and $d_1 e_0 e_1 e_0 \tilde{\phi}_{p,i} = \sum_{j=1} \tilde{c}_{ij} \tilde{\phi}_{p,j} $, then  $\lambda(\tilde{\phi}_{p,i}) = c_{ii}$ and $\mu(\tilde{\phi}_{p,i}) = \tilde{c}_{ii}$. A straightforward but lengthy calculation shows 
	\begin{align*}
	\lambda(\tilde{\phi}_{p,1}) &=(p-1)p^2, & \lambda(\tilde{\phi}_{p,2}) &= \frac{p-1}{p+1}p^2, & \lambda(\tilde{\phi}_{p,3}) &= 0,  \\ \mu(\tilde{\phi}_{p,1}) &= (p-1)p^2,& \mu(\tilde{\phi}_{p,2}) &= p^2, & \mu(\tilde{\phi}_{p,3}) &= 0.
	\end{align*}
	As a consequence, we get 
	\begin{align*}
	J_0(\tilde{\phi}_{p,1})&=p^{-1},& J_0(\tilde{\phi}_2)&=1+ p^{-1}, & J_0(\tilde{\phi}_3)&=1.
	\end{align*}
	It remains to compute $M(\pi)$. The local $L$-factors are computed in \cite[\S 3.1.2]{AS2008} and \cite[Table 2]{Rs2005} and it holds for $\pi$ of type \RN{2}b that
	\begin{align*}
	\frac{L(1,\pi,\operatorname{Ad})}{L(1/2,\pi) L(1/2,\pi \times \chi_{-4})} = (1-p^{-1}) L(1,\pi,\operatorname{Std}).
	\end{align*}
\end{proof}
\section{Kitaoka-Petersson formula}

The primary tool in the proof of Theorem \ref{Theorem1} is a spectral summation of Petersson type for Siegel cusp forms. For the full modular group, it was proved in \cite{Ki1984} and later extended in \cite{CKM2011} to include congruence subgroups. We quote their results and introduce some notation. Let $\Lambda$ denote all symmetric, integral 2-by-2 matrices. A major role plays a generalized Kloosterman sum 
\begin{align*}
K(Q,T;C) = \sum_{D } e(\operatorname{tr}(AC^{-1}Q+C^{-1}DT)),
\end{align*}
where $Q,T \in \mathscr{S}$, $C \in \operatorname{Mat}_2(\Z)$, the sum runs over matrices 
\begin{align} \label{eq:Kl1}
 \left\{D \in \operatorname{M}_2(\Z) \operatorname{mod} C \Lambda \,|\, \begin{pmatrix}
* & * \\ C & D
\end{pmatrix} \in \operatorname{Sp_4}(\Z) \right\}, 
\end{align}
and $A$ is any matrix such that $\begin{pmatrix}
A & * \\ C & D
\end{pmatrix} \in \operatorname{Sp_4}(\Z)$. The cardinality of \eqref{eq:Kl1} depends only on the elementary divisors of $C$, since  
\begin{align} \label{eq:Kl}
K(Q,T;U^{-1}CV^{-1})= K(UQU^T,V^TTV;C) \quad \text{for} ~ U,V \in \operatorname{GL}_2(\Z),
\end{align}
and for $C= U^{-1}\begin{pmatrix}
c_1 &\\ & c_2 \end{pmatrix}V^{-1}$ one has 
\begin{align} \label{sum}
|K(Q,T;C)| \leq c_1^2 c_2^{1/2+\epsilon} (c_2,t_4)^{1/2},
\end{align}
 where $t_4$ is the (2,2)-entry of $T[V]$. 

For a real, diagonalizable matrix $P$ with positive eigenvalues $s_1^2,s_2^2$ we write
\begin{align} \label{Jell}
\mathcal{J}_{\ell}(P) = \int_{0}^{\pi/2} J_{\ell}(4 \pi s_1 \operatorname{sin}\theta)  J_{\ell}(4 \pi s_2 \operatorname{sin}\theta) \operatorname{sin} \theta ~ d\theta, 
\end{align}
where $J_k(x)$ denotes the Bessel function of weight $k$. For $J_k(x)$, we have the simple bounds 
\begin{align} \label{eq:0.2}
(i) \quad J_k(x) \ll 1,  \quad (ii)   \quad J_k(x) \ll x^{-1/2} \quad \text{and} \quad (iii) \quad  J_k(x) \ll x^k
\end{align}
for all $x>0,k>2$. This gives us 
\begin{align} \label{eq:0.3}
(i)  \quad \mathcal{J}_{\ell}(P) \ll  \abs{P}^{\frac{l}{2}} \qquad \text{and} \qquad(ii) \quad \mathcal{J}_{\ell}(P) \ll (\operatorname{det}P)^{\frac{l}{2}} (\operatorname{tr}(P))^{-\frac{l}{2}-\frac{1}{4}}.
\end{align}
The former follows from applying  \eqref{eq:0.2}$(iii)$ to both Bessel factors in \eqref{Jell}. The latter follows by applying  \eqref{eq:0.2}$(iii)$  to the Bessel factor with smaller eigenvalue and \eqref{eq:0.2}$(ii)$ to the other Bessel factor. 


For two matrices $P = \begin{pmatrix}
p_1 & p_2/2   \\
p_2/2 & p_4 \\
\end{pmatrix} \in \mathscr{S}, S = \begin{pmatrix}
s_1 & s_2/2   \\
s_2/2 & s_4 \\
\end{pmatrix} \in \mathscr{S}$ and $c \in \N$ we define a ``Sali\'e''  sum 
\begin{align*} 
H^{\pm}(P,S;c) = \delta_{s_4=p_4} \sideset{}{^*}\sum_{d_1 \,(\operatorname{mod} c)} \sum_{d_2 \, (\operatorname{mod} c)} e\Kl{\frac{\overline{d_1}s_4 d_2^2\mp \overline{d_1}p_2 d_2+s_2d_2+\overline{d_1}p_1+d_1s_1 }{c} \mp \frac{p_2 s_2}{2 c s_4}}.
\end{align*}
This sum is relatively easy to handle. By applying the well-known bound for Gauss sums
$
\sum_{x (\operatorname{mod} c)} e\Kl{\frac{ax^2+bx}{c}} \ll (a,c)^{1/2} c^{1/2} 
$
for the $d_2$ sum and estimating the $d_1$ sum trivially, we get
\begin{align} \label{rank1}
|H^{\pm}(P,S;c)| \ll c^{3/2} (c,s_4)^{1/2}.
\end{align}

For  $Q \in \mathscr{S}$, we define a Poincaré series
\begin{align*}
P_Q(Z) = \sum_{\gamma \in \Gamma_{\infty} \backslash \Gamma_0^{(2)}(N)} j(\gamma,Z)^{-k}e(\operatorname{tr}(Q\gamma Z)) = \sum_{T \in \mathscr{S}} h_Q(T)(\operatorname{det}T)^{\frac{k}{2}-\frac{3}{4}} e(\operatorname{tr}(TZ)),
\end{align*}
where $\Gamma_{\infty} = \left\{\begin{psmallmatrix}
I & S \\ & I
\end{psmallmatrix}| S \in \Lambda \right\}$, Then, we have by \cite[Proposition 2.1]{CKM2011}\footnote{There is a minor typo in \cite[Proposition 2.1]{CKM2011}, there should be an additional factor of 2 on the right hand side. The reason is that the original proof from Klingen  \cite[\S 6]{Kl1990} uses a different definition for  $\Gamma_{\infty}$ that differs from the definition in \cite{CKM2011} (that coincides with ours) by a factor 2.} that
\begin{align*}
\langle F,P_Q \rangle = 8 c_N (\operatorname{det}Q)^{-\frac{k}{2}+\frac{3}{4}} \overline{a_F(Q)}, \quad \text{with~} c_{N}=\frac{ \pi^{1/2} (4 \pi)^{3-2k} \Gamma(k-3/2) \Gamma(k-2)}{4\,  [\operatorname{Sp(4,\Z):\Gamma^{(2)}_0(N)}]  }
\end{align*}
for $F \in S_k^{(2)}(N)$. 
By computing $\langle P_T,P_Q \rangle$ for $T,Q \in \mathscr{S}$ it follows 
\begin{align} \label{eq:08}
8c_{N} \Kl{\frac{\operatorname{det}T}{\operatorname{det}Q}}^{\frac{k}{2}-\frac{3}{4}} \sum_{F \in B_k^{(2)}(N)} \frac{a_F(T)\overline{a_F(Q)}}{\norm{F}^2} = h_Q(T) (\operatorname{det}T)^{\frac{k}{2}-\frac{3}{4}}.
\end{align}
The Fourier coefficients of the Poincaré series, $h_Q(T)$, have been computed in \cite{CKM2011} : 
\begin{lemma} \label{lemma2}It holds for $T,Q \in \mathscr{S}$ and even $k \geq 6$ that
	\begin{align*}
	&h_Q(T)(\operatorname{det}T)^{\frac{k}{2}-\frac{3}{4}} = \delta_{Q \sim T} \# \operatorname{Aut}(T) \\
	&+ \Kl{\frac{\operatorname{det}T}{\operatorname{det}Q}}^{\frac{k}{2}-\frac{3}{4}} \sum_{\pm} \sum_{s} \sum_{\substack{c>1 \\ N \mid c}}\sum_{U,V} \frac{(-1)^{k/2}\sqrt{2}\pi}{c^{3/2}s^{1/2}} H^{\pm}(UQU^T,V^{-1}TV^{-T},c)J_{\ell}\Kl{\frac{4 \pi \sqrt{\operatorname{det}(TQ)}}{cs}} \\
	&+ 8 \pi^2 \Kl{\frac{\operatorname{det}T}{\operatorname{det}Q}}^{\frac{k}{2}-\frac{3}{4}} \sum_{\substack{ \operatorname{det}C \neq 0\\ N \divides C }} \frac{K(Q,T;C)}{\abs{ \operatorname{det}C}^{3/2}} \mathcal{J}_{\ell}(TC^{-1}QC^{-T}),
	\end{align*} 
	where the sum  over $U,V \in \operatorname{GL}_2(\Z)$ in the second term on the right hand side runs over matrices
	\begin{align*}
	U = \begin{pmatrix}
	* & *   \\
	u_3 & u_4\\
	\end{pmatrix} \slash \{\pm\}, \quad V= \begin{pmatrix}
	v_1 & *   \\
	v_3 & * \\
	\end{pmatrix}, \quad (u_3 ~u_4) Q \begin{pmatrix}
	u_3   \\
	u_4 \\
	\end{pmatrix} =  (-v_3 ~ v_1) T \begin{pmatrix}
	-v_3   \\
	v_1 \\
	\end{pmatrix}=s.
	\end{align*}
	Here, ${\operatorname{Aut}(T)=\{U \in \operatorname{GL}_2(\Z) \, | \, U^T T U = T \}}$ and $Q \sim T$ means equivalence in the sense of quadratic forms. The sums are absolutely convergent for $k \geq 6$. 
\end{lemma}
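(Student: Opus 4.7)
The plan is to compute the Fourier coefficients of $P_Q$ by inverse Fourier transform and then unfold the Poincaré series using a Bruhat-type decomposition of $\Gamma_\infty \backslash \Gamma_0^{(2)}(N)$ according to the rank of the lower-left block $C$. Concretely, I would start from
\begin{align*}
h_Q(T)(\operatorname{det} T)^{k/2-3/4} e^{-2\pi \operatorname{tr}(TY)} = \int_{\Lambda \backslash \operatorname{Sym}_2(\R)} P_Q(X+iY)\, e(-\operatorname{tr}(TX))\, dX,
\end{align*}
substitute the definition of $P_Q$, and interchange the sum and the integral. The translations in $\Gamma_\infty$ act freely on the $X$-integration domain, so the $\Gamma_\infty$ orbit sum unfolds the integral to all of $\operatorname{Sym}_2(\R)$ and only a sum over coset representatives $\gamma \in \Gamma_\infty \backslash \Gamma_0^{(2)}(N)$ remains. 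The congruence restriction $N\mid C$ is inherited directly from the definition of $\Gamma_0^{(2)}(N)$.

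The three terms in the statement correspond to the three possible ranks of $C$. The rank-$0$ cosets are just $\Gamma_\infty$ itself, and after accounting for $\operatorname{Aut}(T)$ they yield the diagonal term $\delta_{Q\sim T}\#\operatorname{Aut}(T)$. For rank-$2$ matrices $C$, I would substitute the explicit forms of $\gamma\langle Z\rangle$ and $j(\gamma,Z)=\operatorname{det}(CZ+D)$, combine the exponentials $e(\operatorname{tr}(Q\gamma\langle Z\rangle - TZ))$, and complete the square in $X$; the Gaussian $X$-integral followed by a Bessel-transform identity in $Y$ produces the factor $\mathcal{J}_\ell(TC^{-1}QC^{-T})$, while the residual finite sum over $D\,(\operatorname{mod} C\Lambda)$, with $A$ determined modulo $C$ by symplecticity, is exactly $K(Q,T;C)$. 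For rank-$1$ matrices $C$, I would use \eqref{eq:Kl} to conjugate to the standard form $U^{-1}\begin{pmatrix}0&0\\0&c\end{pmatrix}V^{-1}$, so that the $X$-integration collapses in one variable; the surviving integral gives the single Bessel factor $J_\ell(4\pi\sqrt{\operatorname{det}(TQ)}/(cs))$, and the remaining finite sum in $D$ takes the shape of the Salié-type sum $H^{\pm}(UQU^T,V^TTV;c)$, with the parameter $s$ forced by the intertwining identity $(u_3,u_4)Q(u_3,u_4)^T = (-v_3,v_1)T(-v_3,v_1)^T$.

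The main obstacle will be the rank-$1$ contribution: parameterizing the cosets by $(U,V,s,\pm)$ with the correct stabilizers so that neither over- nor under-counting occurs, and verifying that the two $\pm$ choices arise from the two orientations of a degenerate $C$. A secondary but unavoidable technical point is absolute convergence of the triple sum for even $k \geq 6$, which will follow from combining the Kloosterman bounds \eqref{sum}, \eqref{rank1} and the Bessel estimates \eqref{eq:0.3}. Since \cite{CKM2011}, extending Kitaoka's original treatment of the full modular group, performs precisely this calculation, the role of my argument would be simply to verify that their derivation adapts unchanged once the summation over $C$ is restricted by $N\mid C$; no other structural feature of the formula is altered by passage to $\Gamma_0^{(2)}(N)$.
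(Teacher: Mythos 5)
Your outline — unfold $P_Q$ against $e(-\operatorname{tr}(TX))$, split the cosets of $\Gamma_\infty\backslash\Gamma_0^{(2)}(N)$ by the rank of the block $C$, and evaluate the archimedean integrals to get $J_\ell$ in rank $1$ and $\mathcal{J}_\ell(TC^{-1}QC^{-T})$ in rank $2$ — is exactly the computation of \cite{Ki1984} as extended in \cite{CKM2011}, and the paper gives no independent proof of this lemma: it quotes \cite{CKM2011}. So structurally you and the paper are in the same place. The genuine problem is your closing reduction, that the role of your argument is ``simply to verify that their derivation adapts unchanged'' and that ``no other structural feature of the formula is altered.'' Verifying agreement with \cite{CKM2011} does \emph{not} prove the lemma as stated, because the constant in the rank-$2$ term printed in both \cite{Ki1984} and \cite{CKM2011} is $1/(2\pi^4)$, whereas the lemma asserts $8\pi^2$; the paper explicitly follows \cite[p.\,7]{Bl2016} in declaring the printed constant incorrect. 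This is not a cosmetic issue for the application: the rank-$2$ main term must later cancel exactly against the residue at $t=-s$ from the diagonal contribution, so an uncontrolled constant in that term breaks the proof of Theorem \ref{Theorem1}. To close the gap you must either carry out the archimedean normalization yourself (the Gaussian $X$-integral and the matrix-argument Bessel identity, keeping track of the measure and of the convention for $\Gamma_\infty$ — note the related factor-of-$2$ correction to \cite[Proposition 2.1]{CKM2011} recorded in the paper's footnote, which stems precisely from differing definitions of $\Gamma_\infty$) or import Blomer's corrected constant explicitly rather than ``adapt unchanged.''

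Two smaller inaccuracies in the sketch. The $C=0$ cosets are not ``just $\Gamma_\infty$ itself'': they are indexed by the block-diagonal embedding of $U\in\operatorname{GL}_2(\Z)$, and it is this extra $\operatorname{GL}_2(\Z)$-sum that produces $\delta_{Q\sim T}\,\#\operatorname{Aut}(T)$ instead of $\delta_{Q=T}$; your phrase ``after accounting for $\operatorname{Aut}(T)$'' suggests you know this, but as you yourself note, the rank-$0$ and rank-$1$ coset bookkeeping (including the quotient by $\pm$ on $U$) is exactly where over- or under-counting occurs, so it should be spelled out rather than asserted. Also, in the rank-$1$ term the second argument of the Sali\'e sum is $V^{-1}TV^{-T}$ as in the statement, not $V^{T}TV$ (that transformation rule is the one in \eqref{eq:Kl} for the rank-$2$ sums); the discrepancy amounts to a reparametrization $V\mapsto V^{-T}$ that also changes which entries of $V$ enter the constraint defining $s$, so it needs to be made consistent.
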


In the last term, \cite{Ki1984} and \cite{CKM2011} have the constant $1/2\pi^4$ instead of $8 \pi^2$. As pointed out by \cite[p.\,7]{Bl2016}, this is incorrect. As in \cite{Ki1984} and \cite{CKM2011}, we refer to the first term on the right side as diagonal term, the second as rank 1  and the third as rank 2 case. 

From now on, we assume that $N \equiv 3 \, (\operatorname{mod} 4)$ is prime. The main obstacle to proving Theorem \ref{Theorem1} is computing the rank 2 case. The decay of the Bessel function implies that we only need to consider matrices $C$ with small entries. For $\beta >0$, we set 
\begin{align} \label{cgamma}
\mathscr{C}(\beta) = \{ C  = \begin{pmatrix}
\tilde{c}_1 & \tilde{c}_2 \\ \tilde{c}_3 & \tilde{c}_4 \end{pmatrix} \, | \, 0 \neq \operatorname{det} C \ll N^{\frac{1+\beta}{\ell}}, \tilde{c}_1,\tilde{c}_2,\tilde{c}_3,\tilde{c}_4 \ll N^{\frac{1+\beta}{\ell}} \} 
\end{align}
and
	\begin{align*}
h(m_1,m_2,C) = \frac{K(m_2 I, m_1 I;N C)}{N^3 \abs{\operatorname{det}C}^{3/2}} \mathcal{J}_{\ell} \Kl{\frac{m_1 m_2 C^{-1} C^{-T}}{N^2}}.
\end{align*}
Recall that $\ell = k- 3/2$. 

\begin{lemma} \label{Lemma5} For $m_1,m_2 \ll N^{1+\eps}$ we have
\begin{align*} 
 \sum_{\operatorname{det} C \neq 0}   h(m_1,m_2,C)  
 =  \sum_{C \in \mathscr{C}(\beta)} h(m_1,m_2,C)  +  \mathcal{O} (N^{-1-\beta + \frac{5(1+\beta)}{2\ell}+\eps}). 
\end{align*}	
\end{lemma}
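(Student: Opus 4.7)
The plan is to bound the tail $\sum_{C\notin \mathscr{C}(\beta)}h(m_1,m_2,C)$ absolutely, separating two sub-regions: (A) $\|C\|_\infty \gg X$, where $X:=N^{(1+\beta)/\ell}$, and (B) $\|C\|_\infty \ll X$ but $|\det C|\gg X$. A common pointwise bound valid in both comes from \eqref{sum}: choosing the Smith decomposition $NC=U^{-1}\operatorname{diag}(Nc_1,Nc_2)V^{-1}$ so that the $(2,2)$-entry of $V^T(m_1 I)V$ equals $m_1$, and using $(Nc_2,m_1)=(c_2,m_1)\leq m_1\ll N^{1+\epsilon}$, I would obtain
\[
\frac{|K(m_2 I,m_1 I;NC)|}{N^3|\det C|^{3/2}}\ll N^{\epsilon}\,c_1^{1/2}\,c_2^{-1+\epsilon}.
\]

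For region (A), let $\sigma_1\geq\sigma_2>0$ denote the singular values of $C$, so that $\sigma_1\sigma_2=|\det C|$ and $\sigma_1\asymp \|C\|_\infty=:Y$. The eigenvalues of $P=m_1m_2 C^{-1}C^{-T}/N^2$ are $s_i^2=m_1m_2/(N\sigma_i)^2$, and since $\sigma_1\gg X$ is large we have $\operatorname{tr}(P)\asymp s_2^2$. Bound \eqref{eq:0.3}(ii) combined with $m_1m_2\ll N^{2+\epsilon}$ then unwinds to $\mathcal{J}_\ell(P)\ll N^\epsilon \sigma_2^{1/2}\sigma_1^{-\ell}$, giving, after using $\sigma_2=|\det C|/\sigma_1$,
\[
h(m_1,m_2,C)\ll N^{\epsilon}\,c_1\,c_2^{-1/2}\,Y^{-\ell-1/2}.
\]
Next I would count: writing $C=c_1 C'$ with $C'$ primitive and counting integer solutions to $\det C'=c_2/c_1$ in the box of side $Y/c_1$ yields $\#\{C:\text{elementary divisors }(c_1,c_2),\ \|C\|_\infty\leq Y\}\ll (Y/c_1)^2(c_1c_2)^{\epsilon}$. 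Summing over factorizations $c_1c_2=M$ (with $c_1\leq \sqrt{M}$), then over $1\leq M\leq 2Y^2$, and finally dyadically over $Y>X$, the arithmetic collapses term by term to produce
\[
\sum_{\|C\|_\infty>X}|h(m_1,m_2,C)|\ll N^{\epsilon}X^{5/2-\ell}=N^{-1-\beta+5(1+\beta)/(2\ell)+\epsilon},
\]
which is the claimed error bound.

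Region (B) is easier: both singular values are of polynomial size in $X$, so \eqref{eq:0.3}(i) gives $\mathcal{J}_\ell\ll N^{\epsilon}|\det C|^{-\ell}$, and summing over $X<M\ll X^2$ with the count $\#\{C:\|C\|_\infty\leq X,|\det C|=M\}\ll X^2M^{\epsilon}$ produces $\ll X^{3-\ell+\epsilon}$, dominated by (A). The main obstacle lies entirely in region (A): both the Kloosterman bound and the matrix count grow with the entries of $C$, so convergence must come exclusively from the Bessel function. Bound \eqref{eq:0.3}(i) alone is insufficient once one singular value of $C$ is much larger than the other, so one genuinely needs the mixed estimate (ii); keeping track of the interplay between elementary divisors, singular values and entry sizes so that the sum over $M$ and $Y$ produces precisely the exponent $\tfrac{5(1+\beta)}{2\ell}-1-\beta$ is the delicate bookkeeping step.
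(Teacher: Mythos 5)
There is a genuine gap at the crucial step of your region (A). The bound \eqref{sum} reads $|K(Q,T;NC)|\leq (Nc_1)^2(Nc_2)^{1/2+\epsilon}(Nc_2,t_4)^{1/2}$ with $t_4$ the $(2,2)$-entry of $T[V]=m_1V^TV$, where $V$ comes from the Smith decomposition of $C$; but $V$ is \emph{determined} by $C$ up to right multiplication by $P(c_2/c_1)$, so you are not free to ``choose the decomposition so that the $(2,2)$-entry of $V^T(m_1I)V$ equals $m_1$'': that would force $v_2^2+v_4^2=1$, which is not attainable for generic $C$ (e.g.\ $c_1=1$, $c_2$ large, $V$ generic). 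Pointwise one only has $(Nc_2,t_4)\leq Nc_2$, which gives $\frac{|K(m_2I,m_1I;NC)|}{N^3|\det C|^{3/2}}\ll N^{\epsilon}c_1^{1/2}c_2^{-1/2+\epsilon}$, i.e.\ a loss of $c_2^{1/2}$ against your claimed $c_1^{1/2}c_2^{-1+\epsilon}$. Rerunning your region (A) bookkeeping with the honest bound, the $c_2$-sum no longer converges at the same rate and the dyadic block at $\|C\|_\infty\asymp Y$ contributes $\ll Y^{7/2-\ell+\epsilon}$ instead of $Y^{5/2-\ell+\epsilon}$, so the tail is only $O(N^{-1-\beta+\frac{7(1+\beta)}{2\ell}+\epsilon})$ — weaker than the lemma asserts, and insufficient for the later optimization of $\beta$. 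The gcd saving you need is genuinely an \emph{average} phenomenon over $V$: this is exactly \eqref{eq:39} in the paper, and to use it you must organize the matrices $C$ by the parameters $(c_1,c_2,V,U)$ and count the $U$'s via Minkowski reduction and the estimates \eqref{eq:mr}, \eqref{eq:mr2} (this is the paper's route); your box-count of $C$ with given elementary divisors discards $V$, so the average gcd bound cannot be invoked.

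A secondary error: your region (B) comparison is backwards. Since $X=N^{(1+\beta)/\ell}>1$, the bound $X^{3-\ell}$ is \emph{larger} than the target $X^{5/2-\ell}$, not dominated by it, so region (B) as written already overshoots the claimed error term. This part is repairable: keeping the Kloosterman factor $\ll N^{\epsilon}c_1^{1/2}c_2^{-1/2+\epsilon}$ (trivial gcd) in region (B) gives an extra $|\det C|^{-1/2}$ for the generic class $c_1=1$, and the sum over $X\ll M\ll X^2$ then comes out $\ll X^{5/2-\ell+\epsilon}$ as needed. But the region (A) gap is structural: without either the $V$-average \eqref{eq:39} plus the $U$-counting of \eqref{eq:mr}--\eqref{eq:mr2}, or some substitute showing that matrices with large $(c_2,t_4)$ are rare in a box, the stated exponent $\frac{5(1+\beta)}{2\ell}$ is not reached.
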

\begin{proof} 
By using principal divisors, we can write $C \in \Z^{2\times 2}$ uniquely as
\begin{align*}
C=  U^{-1} \begin{pmatrix}
c_1 \\ & c_2
\end{pmatrix} V^{-1},
\end{align*}
where $1 \leq c_1 \mid c_2, \, U \in \operatorname{GL}_2(\Z), V \in \operatorname{GL}_2(\Z)/P(c_2/c_1)$, cf.\ \cite[Lemma 1]{Ki1984}. Here,
\begin{align*}
P(n) = \left\{ \begin{pmatrix}
a & b \\ c & d
\end{pmatrix} \in \operatorname{GL}_2(\Z)~ \bigg|~ b \equiv 0 \, (\operatorname{mod} N) \right\}.
\end{align*}
In this way, we can assign to a matrix $C$ a unique parameter $(U,c_1,c_2,V)$.  For fixed $c_1,c_2,V$, we pick  $U_1 \in \operatorname{GL}_2(\Z) $ such that 
\begin{align*}
A = U_1^T \begin{pmatrix}
c_1^{-1} \\ & c_2^{-1}
\end{pmatrix} V^T V \begin{pmatrix}
c_1^{-1} \\ & c_2^{-1}
\end{pmatrix} U_1
\end{align*}
is Minkowski-reduced. Then, the matrices $C$ with parameters $(c_1,c_2,V)$ are precisely the matrices 
\begin{align*}
C = U^{-1} U_1^{-1} \begin{pmatrix}
c_1 \\ & c_2
\end{pmatrix} V^{-1},
\end{align*}
where $U$ varies over $\operatorname{GL}_2(\Z)$ and $C^{-T}C^{-1} = U^T A U =: A[U]$. Furthermore, by \cite[Lemma 5.5]{CKM2011} it holds  that 
\begin{align}  \label{eq:39}
\sum_{V =\begin{psmallmatrix}
	* & v_2 \\ * & v_4
	\end{psmallmatrix} \, \in \operatorname{GL}_2(\Z)/P(c_2/c_1)}  (c_2,m_1 (v_2^2+v_4^2))^{1/2} \ll c_1^{-1+\eps} c_2^{3/2+\eps}.
\end{align}
By \eqref{sum}, \eqref{eq:39} and using $(Nc_2,t_4)^{1/2} \leq N^{1/2} (c_2,t_4)^{1/2}$ for $t_4 = m_1 (v_2^2+v_4^2)$, we get 
\begin{align}  \label{eq:31}
	 \sum_{\operatorname{det} C \neq 0}   h(m_1,m_2,C)  \ll N^\eps \sum_{c_1 \mid c_2}  c_1^{-1/2+\eps} c_2^{1/2+\eps} \bigg(\sum_{\substack{U \in \operatorname{GL}_2(Z) \\  \operatorname{tr} A[U] \leq 1  }} \left(\frac{m_1m_2}{N^2} \right)^\ell (\operatorname{det}A)^{\ell/2} \\   \notag
+  \sum_{\substack{U \in \operatorname{GL}_2(Z) \\ \operatorname{tr} A[U] > 1}} \left(\frac{m_1m_2}{N^2} \right)^{\ell/2-1/4} \operatorname{det}A^{\ell/2}  \operatorname{tr}(A[U])^{-\ell/2-1/4} \bigg),
	 \end{align}
where we applied \eqref{eq:0.3} $(i)$ for the first, and \eqref{eq:0.3} $(ii)$ for the second sum. For these two sums, we have the following estimates from  \cite[Lemma 3]{Ki1984}, \cite[Lemma 3.4]{KST2012} 
	\begin{align}   \label{eq:mr}
	\# \{U \in \operatorname{GL}_2(Z) |  \operatorname{tr} A[U] \leq 1 \} &\ll (\operatorname{det} A)^{-1/2},  \\ \label{eq:mr2}
	 \sum_{\substack{U \in \operatorname{GL}_2(Z), \, \operatorname{tr}(A[U] > 1}}  \operatorname{det}A^{1+\delta}  \operatorname{tr}(A[U])^{-5/4-\delta} &\ll \operatorname{det} A^{1/2+\delta} 
	\end{align} 
	for every $\delta>0$ and  $A$ Minkowski-reduced. 
	
First, we consider $C$ in $\mathcal{C} := \{C \,| \, \det(C) \gg N^{(1+\beta)/l}  \} $. By \eqref{eq:31}, \eqref{eq:mr},\eqref{eq:mr2} (with $\delta=l/2-1$),  
$m_1,m_2 \ll N^{1+\eps}$ and $\operatorname{det}A = (c_1 c_2)^{-2}$, we get 
\begin{align*}
\sum_{C \in \mathcal{C}} h(m_1,m_2,C) \ll  N^{\eps} \! \! \! \sum_{\substack{c_1 \mid c_2, \\ c_1 c_2 \gg N^{(1+\beta)/\ell}}} c_1^{-\ell+1/2+\eps} c_2^{-\ell +3/2+\eps}  
 &\ll N^{\eps}  \sum_{\substack{ c \gg N^{(1+\beta)/\ell}}}  c^{-\ell+3/2+\eps}  \\ &\ll N^{-1-\beta+ \frac{5 (1+\beta)}{2\ell} +\eps}. 
\end{align*} 
Next, we treat $C$ in 
\begin{align*} 
\mathcal{C}_m = \{ C \, | \, N^{\frac{m}{\alpha}} \ll \det C \ll N^{\frac{m+1}{\alpha}} \text{ and } \operatorname{tr} (C^{-T}C^{-1} )  \gg N^{\frac{2+2\beta}{l} - \frac{2m}{\alpha}} > 1\}
\end{align*} 
with $\alpha \in \Z$, such that $2l \mid \alpha$ and $\frac{m+1}{\alpha} \leq \frac{1+\beta}{l}$. By \eqref{eq:31} and \eqref{eq:mr2} with $\delta = \epsilon$, we get 
\begin{align*}
\sum_{C \in \mathcal{C}_m} h(m_1,m_2,C) \ll   N^{-1-\beta+ \frac{(2+2\beta)}{l} +\frac{m(l-2)}{\alpha }+\eps}  \! \! \! \! \! \sum_{\substack{ c \gg N^{m/\alpha}}} \!\! \! \! \!  c^{-\ell+3/2+\eps}    \ll N^{-1-\beta+ \frac{5}{2\ell}(1+\beta)+\eps}.
\end{align*}
Note that if  $C \not \in \bigcup_m \mathcal{C}_m \cup \mathcal{C}$, then $C \in \mathscr{C}(\beta)$. Indeed, the former implies that  
\begin{align*}
\tilde{c}_1^2+\tilde{c}_2^2+\tilde{c}_3^2+\tilde{c}_4^2 = \operatorname{tr} (C^{-T}C^{-1})  (\operatorname{det} C)^{2} \ll N^{(2+2\beta)/\ell+2 / \alpha} \textnormal{ for } C = \begin{pmatrix}
\tilde{c}_1 & \tilde{c}_2 \\ \tilde{c}_3 & \tilde{c}_4 \end{pmatrix}.
\end{align*}
By choosing $\alpha$ sufficiently large, the claim follows.
\end{proof}

\begin{bem} \label{remark}
There are $\mathcal{O}(N^{3(1+\beta)/\ell+\eps})$ elements in $\mathscr{C}(\beta)$. If we fix $\tilde{c}_1,\tilde{c}_4, \operatorname{det} C$, there are only $\tau(\tilde{c}_1 \tilde{c}_4 -  \operatorname{det} C ) \ll N^{\eps}$ choices for $\tilde{c}_2$ and $\tilde{c}_3$, since $\tilde{c}_2 \tilde{c}_3 = \tilde{c}_1 \tilde{c}_4 -  \operatorname{det} C$.
\end{bem}

\section{Symplectic Kloosterman sums} 

This section focuses on the  decomposition of the Kloosterman sum in the rank 2 term of Lemma \ref{lemma2}. The idea is to decompose the modulus $NC$ into a large part of modulus $N$ and a small part of modulus $C$. After computing the $N$ part via a congruence condition in the proof of Theorem \ref{Theorem1}, we see that the rank 2 term is small unless $C$ has a certain form. This condition allows us to  compute the remaining sum over Kloosterman sums. 

Kitaoka \cite[Lemma 1-3]{Ki1984} worked out how to decompose a Kloosterman sum for the case that $C$ is diagonal and in combination with \eqref{eq:Kl}, this is sufficient in most cases. However, our Kloosterman sum $K(\mu_1 I , \mu_2 I;N C)$ with $\mu_1,\mu_2, N \in \Z$ has diagonal terms in the first two arguments and such an approach would imply non-diagonal terms in the first two arguments. However, in order to apply Lemma \ref{lemma5}, we require a diagonal term in the second argument of the Kloosterman sum, hence, we need to adjust Kitaoka's proof slightly. To simplify notation, we set $\Gamma:= \operatorname{Sp}_4(\Z)$. 

\begin{lemma}  \label{lemma3} Set $c:= \operatorname{det} C$ and assume $(c,N)=1$. Choose integers  $s,t$ with $sN + t c =1$ and set $X = t c \cdot C^{-1}$. Let  $Q,T \in \mathscr{S}$. Then 
	\begin{align*}
	K(Q,T, N C) = K(X Q X^{T},T,N) K(s^2 Q,T,C).
	\end{align*}
\end{lemma}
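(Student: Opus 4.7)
The plan is to carry out a matrix Chinese Remainder Theorem, exploiting the hypothesis $\gcd(N,c)=1$. The starting observation is the identity
\[
(NC)^{-1} = sC^{-1} + N^{-1}X, \qquad X = tc\,C^{-1} = t\operatorname{adj}(C) \in \operatorname{Mat}_2(\Z),
\]
obtained by expanding $\tfrac{1}{Nc} = \tfrac{s}{c}+\tfrac{t}{N}$. This splits the inverse modulus into a ``$C$-part'' with denominator $c$ and an ``$N$-part'' with denominator $N$, paralleling the factorisation of Kloosterman sums we want to prove.

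First I would set up a CRT bijection on summation sets. Writing $D \equiv sND_2 + tcD_1 \pmod{NC\Lambda}$, the components $D_1\pmod{N\Lambda}$ and $D_2\pmod{C\Lambda}$ should traverse the summation sets of $K(\,\cdot\,,\,\cdot\,;NI)$ and $K(\,\cdot\,,\,\cdot\,;C)$ respectively. Cardinality considerations ($|M_2(\Z)/NC\Lambda|=(Nc)^3=N^3\cdot c^3$) are consistent, and one must verify that the symplectic condition ``$CD^T$ symmetric'' interpreted modulo $NC\Lambda$ decouples into the corresponding conditions for $D_1$ and $D_2$. A matching decomposition has to be derived for the completing matrix $A$, writing it as a CRT combination of $A_1,A_2$ that complete modulo $NI$ and $C$ respectively.

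Second, I would substitute the parametrisation into the summand and collect terms. Using $sN\equiv 1\pmod c$, integrality of $X$ and $\operatorname{adj}(C)$, and the integrality of $\operatorname{tr}(\operatorname{adj}(C)D_iT)$ (which follows from the symmetry of $T$ combined with the symplectic condition $CD_i^T$ symmetric), all ``cross'' contributions vanish in $e(\,\cdot\,)$ and the exponential factorises as
\[
e\Bigl(\tfrac{1}{N}\operatorname{tr}(A_1XQX^T + D_1T)\Bigr)\cdot e\Bigl(s^2\operatorname{tr}(A_2C^{-1}Q)+\operatorname{tr}(C^{-1}D_2T)\Bigr).
\]
The factor $s^2$ in the $C$-part arises from $s^2N\equiv s\pmod c$ together with a bijective rescaling $D_2\mapsto sD_2$, valid because $s$ is invertible mod $c$. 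The sandwich $XQX^T$ in the $N$-part is forced by how $A$ is conjugated by $X$ when projecting onto its $N$-component.

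The main obstacle will be Step~1: unlike the scalar case, the matrix condition $CD^T=DC^T$ does not decouple automatically modulo $N$ and modulo $C$, so one must carefully check the bijection and the compatibility of the symplectic completability conditions for $A,A_1,A_2$. The half-integrality of $T\in\mathscr S$ requires a small secondary check to confirm that the residual trace contributions are indeed integer-valued so that they disappear from $e(\,\cdot\,)$. Once these technicalities are settled, the double sum over $(D_1,D_2)$ factorises as $K(XQX^T,T;N)\,K(s^2Q,T;C)$, yielding the claimed identity.
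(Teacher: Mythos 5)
Your overall strategy -- splitting $(NC)^{-1}=sC^{-1}+N^{-1}X$, factoring the summation set by a CRT-type bijection, and then factoring the exponential -- is the same as the paper's (which follows Kitaoka), but the concrete bijection you propose in Step 1 does not work, and the point you flag as "to be verified" is in fact false as stated. If $D\equiv sND_2+tcD_1 \pmod{NC\Lambda}$ with $D_1$ taken from the modulus-$NI$ summation set (so $D_1$ symmetric) and $D_2$ from the modulus-$C$ set (so $CD_2^T$ symmetric), then $CD^T=sN\,CD_2^T+tc\,CD_1$, and $CD_1$ need not be symmetric; so $D$ generally fails the symplectic condition for modulus $NC$. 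Moreover the extraction of components is not the naive one: $tcD_1\notin C\Lambda$ in general (it equals $t\,C\operatorname{adj}(C)D_1$ and $\operatorname{adj}(C)D_1$ need not be symmetric), so reducing $D$ modulo $C\Lambda$ does not recover $sND_2$, and reducing modulo $N\Lambda$ does not recover $D_1$. The scalar-style CRT combination simply does not respect the lattices $N\Lambda$, $C\Lambda$, $NC\Lambda$.

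The missing idea is the twisted decomposition: the correct bijection sends $D \bmod NC\Lambda$ to the pair $(XD \bmod N\Lambda,\; sD \bmod C\Lambda)$, not to reductions of $D$ itself. The twist by $X=tc\,C^{-1}$ is exactly what converts the condition ``$C^{-1}D$ symmetric'' into ``$XD$ symmetric'' (so the first component really lies in the modulus-$NI$ summation set), and it is also what forces the first argument $XQX^T$: the completing matrices are $\begin{pmatrix} C^TA & C^TB-sA^TD\\ NI & XD\end{pmatrix}$ and $\begin{pmatrix} NA & NB-X^TA^TD\\ C & sD\end{pmatrix}$, after which the identity $CX+sNI=I$ lets one expand $\operatorname{tr}(A(NC)^{-1}Q+(NC)^{-1}DT)$ into the mod-$N$ term $\operatorname{tr}(C^TA\,N^{-1}XQX^T+N^{-1}XDT)$ plus the mod-$C$ term $\operatorname{tr}(NA\,C^{-1}s^2Q+C^{-1}sDT)$, the cross terms being integral symmetric traces. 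You half-anticipate this when you say the sandwich $XQX^T$ comes from conjugating $A$ by $X$, but without replacing your parametrisation by the twisted map the bijection, the decoupling of the symplectic condition, and the factorisation of the exponential all break down; no rescaling $D_2\mapsto sD_2$ is needed once the correct components $(XD,sD)$ are used.
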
	
\begin{proof} 
	It holds that$ \begin{pmatrix}
		A & B   \\
		C & D\\
	\end{pmatrix} \in \Gamma$  if and only if $A^TD - C^T B = I$ and $A^T C$ and $B^T D$ are symmetric. Since $A^TC$ symmetric implies that $AC^{-1}$ is symmetric, it holds that, 
	\begin{align*}
	\begin{pmatrix}
	A & B   \\
	NC & D\\
	\end{pmatrix} \in \Gamma \quad \Leftrightarrow \quad \begin{pmatrix}
	C^T A & C^T B  - s A^T D \\
	NI & X D \\
	\end{pmatrix}, \begin{pmatrix}
	N  A & N B  - X^T A^T D \\
	C & s D \\
	\end{pmatrix} \in \Gamma,
	\end{align*}
cf.\  \cite[Proof of Lemma 1]{Ki1984}. Consequently, we can show that the map
\begin{align*}
D \operatorname{mod} NC \Lambda &\mapsto (X D \operatorname{mod} N \Lambda, sD \operatorname{mod} C \Lambda) \\
\left\{D \operatorname{mod} NC \Lambda | \begin{pmatrix}
* & *   \\
NC & D\\
\end{pmatrix} \in \Gamma\right\} &\to \left\{D \operatorname{mod} N \Lambda | \begin{pmatrix}
* & *   \\
N I & D\\
\end{pmatrix} \in \Gamma\right\} \times \left\{D \operatorname{mod} C \Lambda | \begin{pmatrix}
* & *   \\
C & D\\
\end{pmatrix} \in \Gamma\right\}
\end{align*}
is bijective. This works exactly as in the proof of \cite[Lemma 2]{Ki1984}, since $NC = CN$. 

By $CX+sNI=I$, we obtain
	\begin{align*}
	&\operatorname{tr}(A(NC)^{-1}Q + (NC)^{-1}DT)  \\
	&= \operatorname{tr}((X^TC^T A + sN A) N^{-1} C^{-1} Q + N^{-1} C^{-1} (C X D + s N D) T) \\
	&= \operatorname{tr}(( X^TC^T A N^{-1} C^{-1} Q + s A C^{-1} Q + N^{-1} X D T + s C^{-1} D T) \\
	&=\operatorname{tr}( X^TC^T A N^{-1} (sN I + X C) C^{-1} Q + s A (sN I+ X C)C^{-1} Q )\\
	& \hspace{5cm} +\operatorname{tr}(N^{-1} X D T + s C^{-1} D T) \\ 
	&=\operatorname{tr}( C^T A N^{-1}  X  Q X^{T} + N^{-1} X DT) + \operatorname{tr}(NAC^{-1} s^2 Q + C^{-1} s D T ) 
	\\ &\hspace{5cm} + \operatorname{tr} (s X^TC^T A C^{-1} Q) + \operatorname{tr} (sAXQ).
	\end{align*}
	Since $s X^TC^T A C^{-1} Q = s t c A C^{-1} Q = s A X Q$ is symmetric and integral, we conclude that
	\begin{align*}
\operatorname{tr}(A&(NC)^{-1}Q + (NC)^{-1}DT) \\
&\equiv \operatorname{tr}( C^T A N^{-1}  X  Q X^{T} + N^{-1} X DT) + \operatorname{tr}(NAC^{-1} s^2 Q + C^{-1} s D T ) ~ (\operatorname{mod} 1).
	\end{align*}
\end{proof}

If the modulus is $pI$ for a prime $p$, a symplectic Kloosterman sum simplifies as follows: 
\begin{lemma} \label{lemma4} Let $p$ be a prime, $Q  = \begin{pmatrix}
	q_1 & q_2/2 \\ q_2/2 & q_4
	\end{pmatrix}$ and $T= \begin{pmatrix}
	t_1 & t_2/2 \\ t_2/2 & t_4
	\end{pmatrix}$.
	Then 
	\begin{align*} 
	K(Q, T; p I) = \sum_{\substack{d_1,d_2,d_4 (\operatorname{mod} p) \\ p \nmid \delta }} e\Kl{\frac{ \overline{\delta} (d_4 q_1 -d_2 q_2 + d_1 q_4)+ d_1 t_1 +d_2 t_2 +d_4 t_4}{p}}, 
	\end{align*}	
	where $\delta = d_1 d_4 - d_2^2$.   
\end{lemma}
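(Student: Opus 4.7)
The plan is to work out the symplectic condition for the modulus $C = pI$ explicitly, reducing the Kloosterman sum to a triple sum over $\Z/p\Z$. Recall the defining conditions for $M = \begin{pmatrix} A & B \\ C & D \end{pmatrix} \in \operatorname{Sp}_4(\Z)$: (i) $A^T C = C^T A$, (ii) $B^T D = D^T B$, and (iii) $A^T D - C^T B = I$. For $C = pI$, condition (i) forces $A = A^T$, and (iii) becomes $AD - pB = I$, so $B = p^{-1}(AD - I)$ must be integral, i.e.\ $AD \equiv I \pmod p$. Hence $\det D \not\equiv 0 \pmod p$ and $A \equiv D^{-1} \pmod p$.

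A short computation shows that with $A$ symmetric and $B = p^{-1}(AD-I)$, one has $B^T D - D^T B = p^{-1}(D^T - D)$. Thus condition (ii) forces $D = D^T$. Consequently the sum over $D \in M_2(\Z)/(pI)\Lambda$ satisfying the symplectic extension condition reduces to a sum over symmetric $D \in \Lambda/p\Lambda$ with $p \nmid \det D$. Writing $D = \begin{pmatrix} d_1 & d_2 \\ d_2 & d_4 \end{pmatrix}$ and $\delta = d_1 d_4 - d_2^2$, the summation range becomes precisely $(d_1, d_2, d_4) \in (\Z/p\Z)^3$ with $p \nmid \delta$, matching the statement.

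Since $D$ is symmetric, so is $D^{-1}$, and we may take as integer lift $A \equiv \bar\delta \begin{pmatrix} d_4 & -d_2 \\ -d_2 & d_1 \end{pmatrix} \pmod p$. Then a direct computation, using the half-integer symmetric forms of $Q$ and $T$, gives
\begin{align*}
\operatorname{tr}(AQ) &\equiv \bar\delta\bigl(d_4 q_1 - d_2 q_2 + d_1 q_4\bigr) \pmod p,\\
\operatorname{tr}(DT) &= d_1 t_1 + d_2 t_2 + d_4 t_4.
\end{align*}
Dividing by $p$ in the exponential and combining yields the claimed formula.

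The main subtlety is verifying that $e\bigl(\operatorname{tr}(AQ)/p\bigr)$ is independent of the chosen integer lift of $A$: the allowed shifts are $A \mapsto A + pE$ with $E$ symmetric integral (to preserve $A = A^T$), and a direct check shows $\operatorname{tr}(EQ) = e_1 q_1 + e_2 q_2 + e_4 q_4 \in \Z$ since $Q \in \mathscr{S}$ has integer diagonal and symmetric half-integer entries. The analogous check for $D$-shifts by $p\Lambda$ is identical. Once this bookkeeping is in place, the rest is a routine matrix calculation.
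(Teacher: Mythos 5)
Your proof is correct and follows essentially the same route as the paper: identify the admissible $D$ as the symmetric matrices mod $p\Lambda$ with $p \nmid \delta$, take $A \equiv \bar\delta\begin{pmatrix} d_4 & -d_2 \\ -d_2 & d_1\end{pmatrix}$, and compute the traces. The only cosmetic difference is that you derive the symmetry of $D$ and the invertibility condition directly from the block relations $A^TC = C^TA$, $B^TD = D^TB$, $A^TD - C^TB = I$, whereas the paper quotes the standard criterion that $(C,D)$ completes to a symplectic matrix iff $C^{-1}D$ is symmetric and $(C,D)$ is primitive; your extra check of independence from the lift of $A$ is a welcome addition the paper leaves implicit.
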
	
\begin{proof} 
	Set  $D= \begin{pmatrix}
	d_1  & d_2  \\
	d_3 & d_4 \\
	\end{pmatrix}$. Since $\begin{pmatrix}
	* & *   \\
	C & D \\
	\end{pmatrix} \in \Gamma$  if and only if $C^{-1}D$ is symmetric and $(C,D)$ is primitive,\footnote{$(C,D)$ is primitive, if there exists $U= \begin{pmatrix} * & * \\ C & D \end{pmatrix} \in \operatorname{GL}_4(\Z)$.} it follows for $C= N I$ that $d_2 = d_3$ and the sum runs over all $d_1,d_2,d_4 $ modulo $N$ that satisfy $p \nmid \delta$.  Let $\overline{\delta}$ be an integer such that $ \delta \overline{\delta} \equiv 1 \operatorname{mod} N$. Setting  $A= \overline{\delta} \begin{pmatrix}
	d_4  & -d_2  \\
	-d_2 & d_1 \\
	\end{pmatrix}$, it holds that $A^TC$ is symmetric and $B:=(A^TD-I_2)N^{-1} \in M_2(\Z)$. 
\end{proof}

The next lemma counts the number of solutions of a congruence that arises when computing the Kloosterman sum of modulus $N$. 

\begin{lemma} \label{lemma5} Let $N \equiv  3 \,(\operatorname{mod} 4)$ be a prime and  $h_1,h_2,c_1,c_2,c_4 \in \Z$ such that $ 4 c_1 c_4 - c_2^2 \notequiv 0 \, (\operatorname{mod}N)$. Let $L(c_1,c_2,c_4,h_1,h_2)$ denote the number of solutions $d_1,d_2,d_4 \, (\operatorname{mod} N)$ of 
	\begin{align} \label{cong1}
	 h_1 &\equiv a(d_1+d_4)  \hspace{2.3cm}(\operatorname{mod} N) \\ \label{cong2} 
	(d_1 d_4 -d_2^2)  h_2 &\equiv  b(d_4 c_1 -d_2c_2 + d_1 c_4) \hspace{0.6cm}(\operatorname{mod} N) \\ \label{cong3}
	0 &\notequiv d_1 d_4 - d_2^2  \hspace{2.4cm}(\operatorname{mod} N),
	\end{align}	
	where $a,b$ are arbitrary integers coprime to $N$. Then
	\begin{align*}
 L(c_1,c_2,c_4,h_1,h_2) = \delta_{h_1\equiv h_2 \equiv 0\, (\operatorname{mod} N)}\, \delta_{c_1\equiv c_4,  c_2 \equiv 0 \,(\operatorname{mod} N)} \, N^2 + \mathcal{O}(N).
	\end{align*}	
\end{lemma}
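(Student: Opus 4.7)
The plan is to eliminate $d_4$ using the first congruence and then analyze the second congruence as a conic in the remaining variables $(d_1,d_2)$ over $\F_N$. Since $(a,N)=1$, congruence \eqref{cong1} fixes $d_1+d_4 \equiv s~(\operatorname{mod} N)$, where $s:=\bar{a} h_1$. Substituting $d_4 = s - d_1$, the quantity $\Delta := d_1 d_4 - d_2^2$ becomes $s d_1 - d_1^2 - d_2^2$, and \eqref{cong2} rewrites as
\begin{align*}
-h_2(d_1^2+d_2^2) + h_2 s d_1 - b(c_4-c_1)d_1 + b c_2 d_2 - b s c_1 \equiv 0 ~(\operatorname{mod} N).
\end{align*}
I would first ignore \eqref{cong3}, count the $(d_1,d_2)$-solutions of this single equation, and at the end subtract the $O(N)$ pairs with $\Delta \equiv 0~(\operatorname{mod} N)$, since for each $d_1$ the equation $d_2^2 \equiv s d_1 - d_1^2~(\operatorname{mod} N)$ has at most two solutions.

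The central case is $h_2 \not\equiv 0~(\operatorname{mod} N)$, where the equation defines a plane conic with quadratic part $-h_2(d_1^2+d_2^2)$. Since $N \equiv 3~(\operatorname{mod} 4)$, $-1$ is a non-residue modulo $N$, so $d_1^2+d_2^2$ is anisotropic over $\F_N$. Completing the square reduces the congruence to $x^2+y^2 \equiv K~(\operatorname{mod} N)$ for some constant $K$ depending on the data. The standard character-sum evaluation $\#\{x^2+y^2=K\}=N+1$ for $K \neq 0$ and $=1$ for $K=0$, which follows from $\sum_{x \in \F_N}\chi(K-x^2) = -\chi(-1) = 1$ in the nondegenerate case (with $\chi$ the Legendre symbol), shows the conic has at most $N+1$ points, a contribution of size $O(N)$.

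It remains to handle $h_2 \equiv 0~(\operatorname{mod} N)$, in which the congruence degenerates to the linear equation $(c_4-c_1)d_1 - c_2 d_2 \equiv -s c_1~(\operatorname{mod} N)$. If $(c_4-c_1,c_2) \not\equiv (0,0)~(\operatorname{mod} N)$ this has exactly $N$ solutions in $(d_1,d_2)$. Otherwise $c_1 \equiv c_4$ and $c_2 \equiv 0~(\operatorname{mod} N)$; the non-degeneracy hypothesis $4c_1 c_4 - c_2^2 \not\equiv 0~(\operatorname{mod} N)$ then forces $c_1 \not\equiv 0$, so the equation reduces to $s c_1 \equiv 0$, i.e.\ $h_1 \equiv 0~(\operatorname{mod} N)$. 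In this subcase all $N^2$ pairs $(d_1,d_2)$ solve \eqref{cong2}, while if $h_1 \not\equiv 0$ there are none. Combining all subcases with the $O(N)$ correction from \eqref{cong3} yields the claimed asymptotic. The main technical obstacle is the anisotropy step in the $h_2 \not\equiv 0$ case: it is precisely here that the hypothesis $N \equiv 3~(\operatorname{mod} 4)$ is indispensable, since otherwise the conic would be split and could contribute an extra $N$-sized term that would spoil the stated error bound.
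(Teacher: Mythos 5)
Your proposal is correct and follows essentially the same route as the paper: eliminate $d_4$ via the linear congruence, treat the degenerate case $h_2\equiv 0$ as a linear equation in $(d_1,d_2)$ (using $4c_1c_4-c_2^2\not\equiv 0$ to rule out the all-zero coefficient case unless $c_1\equiv c_4$, $c_2\equiv 0$), and in the nondegenerate case complete the square to the anisotropic conic $x^2+y^2\equiv K\ (\operatorname{mod} N)$ with $N+1$ (resp.\ $1$) solutions, where $N\equiv 3\ (\operatorname{mod}4)$ is used exactly as in the paper. Your handling of \eqref{cong3} as a uniform $\mathcal{O}(N)$ correction at the end is only a cosmetic reorganization of the paper's argument.
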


\begin{proof}
	First, we compute the left hand side for $h_1 \equiv h_2 \equiv 0 \, (\operatorname{mod} N)$. It follows by \eqref{cong1} that  $d_4 \equiv -d_1\, (\operatorname{mod} N)$ and by \eqref{cong2} thus $d_1 (c_1-c_4) + c_2 d_2 \equiv 0 \,(\operatorname{mod} N)$. 
	
	For  $c_1 \equiv c_4$ and $c_2 \equiv 0 \, (\operatorname{mod} N)$, this congruence holds for arbitrary  $d_1,d_2$. In addition, congruence \eqref{cong3} requires  that $d_1^2 \notequiv d_4^2 \,(\operatorname{mod} N)$. Since only $2N$ pairs $d_1,d_2$ fulfill $d_1^2 \equiv d_4^2 \,(\operatorname{mod} N)$, there are $N^2 - 2N $ solutions for $d_1,d_2,d_4 \,(\operatorname{mod} N)$ satisfying all three congruences. On the other hand, for $c_1 \notequiv c_4$ or $c_2 \notequiv 0 \, (\operatorname{mod} N)$, there are less than $N+1$ solutions, since choosing $d_1$ already fixes $d_4$ and vice versa. 
	
	Next, we show that for fixed $h_1 \notequiv 0$ or $h_2 \notequiv 0$ and arbitrary fixed $c_1,c_2,c_4$, we have $L(h_1,h_2,c_1,c_2,c_4) \leq N+1$. By \eqref{cong1} it holds that $d_4 \equiv \bar{a}  h_1 -d_1 ~(\operatorname{mod} N) $. After substituting $h_1$ by $a h_1$ and $h_2$ by $b h_2$,  \eqref{cong2} equals
	\begin{align*}
	h_2 (-d_1^2-d_2^2) +h_2 h_1 d_1 \equiv d_1 (c_4 - c_1) -d_2 c_2 + c_1 h_1 \quad (\operatorname{mod} N).
	\end{align*}
	For $h_2 \equiv 0$, this gives $0 \equiv (c_4-c_1) d_1 - c_2 d_2 + c_1 h_1$. Since $h_1 \notequiv 0$ and either $c_4-c_1, c_1$ or $c_2$ is $\notequiv 0 \,(\operatorname{mod} N)$, choosing $d_1$ fixes $d_4$ and vice versa. For $h_2 \notequiv 0$ we get 
	\begin{align*}
	(d_1 - \overline{2} (c_4-c_1-&h_1))^2 + (d_2 -  \overline{2 h_2} c_2)^2 \\ &\equiv  h_1 c_1 + \overline{h_2}+d_1) + (\overline{2} (c_4-c_1-h_1))^2 + (\overline{2h_2} c_2)^2 \quad (\operatorname{mod}N).
	\end{align*}
	The congruence $x^2+y^2 \equiv n (\operatorname{mod} N)$ has $N+1$ solutions for $x,y \, (\operatorname{mod} N)$ if $N \nmid n$, and one solution if $N \mid n$, namely $(0,0)$. 
\end{proof}

To treat the remaining sum over Kloosterman sums, we cite \cite[Lemma 4]{Bl2016}. Let  $\varphi: \Z[i] \to \N$ denote Euler's totient function on $\Z[i]$. 
\begin{lemma} \label{Blomer}
Let 
\begin{align} \label{GO2}
C \in \operatorname{GO}_2(\Z)= \left\{ \begin{pmatrix}
x & y \\ \mp y & \pm x 
\end{pmatrix} ~ | ~ (x,y) \in \Z^2\ \neq \{(0,0)\} \right\}
\end{align}
and $q_1,q_2$ two fundamental discriminants (possibly 1). Then
\begin{align*}
\sum_{\substack{\mu_1 (\operatorname{mod} [q_1,\operatorname{det} C]) \\ \mu_2 (\operatorname{mod} [q_1,\operatorname{det} C])  }} \chi_{q_1}(\mu_1) \chi_{q_2}(\mu_2) K(\mu_2 I, \mu_1,C) = \delta_{q_1 =q_2 =1} \abs{\operatorname{det}C}^2 \varphi(x+iy).
\end{align*}
\end{lemma}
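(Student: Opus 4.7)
The plan is to open the definition of the symplectic Kloosterman sum using $Q=\mu_2 I$, $T=\mu_1 I$ and interchange summation so that the $(\mu_1,\mu_2)$-sums sit inside. Since the traces are linear in the scalars, for each admissible pair $(A,D)$ the inner double sum factors as $\Sigma_1(D)\Sigma_2(D)$ with
\[
\Sigma_i(D)=\sum_{\mu\,(\operatorname{mod}\,[q_i,|\det C|])}\chi_{q_i}(\mu)\,e(\mu\,\gamma_i(D)),
\]
where $\gamma_1(D)=\operatorname{tr}(C^{-1}D)$ and $\gamma_2(D)=\operatorname{tr}(AC^{-1})$.

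Next I would exploit the $\operatorname{GO}_2$-structure: $C=\begin{pmatrix}x&y\\ \mp y&\pm x\end{pmatrix}$ represents (up to sign) multiplication by $x+iy$ on $\Z[i]\cong\Z^2$. The symplectic symmetry of $DC^{-1}$ forces $x(d_2\mp d_3)=\pm y(d_1+d_4)$; writing $g=\gcd(x,y)$ and $(x,y)=g(x',y')$ with $\gcd(x',y')=1$, this is solved by $d_1+d_4=x's$, $d_2\mp d_3=y's$ for a single integer $s$. A short substitution then gives $\gamma_1(D)=s/g$, and a parallel calculation for $A$ (using $C^T=\pm|\det C|\,C^{-1}$) shows that $\gamma_2(D)$ also has denominator dividing $g$. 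Thus the a priori denominator $|\det C|=x^2+y^2$ collapses to $g$: this is the decisive structural input coming from $C\in\operatorname{GO}_2(\Z)$.

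Now I would evaluate the inner sums $\Sigma_i$. The integrand $\chi_{q_i}(\mu)e(\mu\gamma_i(D))$ actually has period $[q_i,g]$, a divisor of $[q_i,|\det C|]$, so $\Sigma_i(D)=([q_i,|\det C|]/[q_i,g])\cdot G_i$, where $G_i$ is a Gauss-type sum of the primitive character $\chi_{q_i}$ against an additive character of modulus dividing $g$. A CRT factorisation of $G_i$ over primes dividing $[q_i,g]$, together with the standard vanishing criterion for primitive Gauss sums twisted by additive characters of smaller conductor, shows $G_i\equiv0$ whenever $q_i\ne 1$. Checking this vanishing carefully at primes dividing both $q_i$ and $g$ (where primitive and imprimitive pieces can interact) is the main technical obstacle I anticipate.

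In the surviving case $q_1=q_2=1$ each $\Sigma_i(D)$ reduces to $|\det C|\cdot \mathbf{1}\{\gamma_i(D)\in\Z\}$, so the whole expression becomes $|\det C|^2$ times the count of admissible $D$ satisfying both integrality conditions in addition to primitivity of the pair $(C,D)$. Under the isomorphism $\Z^2\cong\Z[i]$ making $C$ act as multiplication by $x+iy$, the surviving $D$ correspond exactly to invertible residue classes in $\Z[i]/(x+iy)$, so the count equals $\varphi(x+iy)$, which together with the two $|\det C|$ factors yields the claimed identity.
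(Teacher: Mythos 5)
Your structural observation is correct and genuinely useful: for $C=\begin{psmallmatrix}x&y\\ \mp y&\pm x\end{psmallmatrix}$ the symmetry of $C^{-1}D$ forces $d_1+d_4=x's$, $d_2\mp d_3=y's$ and hence $\operatorname{tr}(C^{-1}D)=s/g$ with $g=(x,y)$, so the additive characters have denominator $g$ rather than $\det C$; the bookkeeping in the case $q_1=q_2=1$ is then plausible. Note, though, that the paper does not prove this lemma at all (it cites \cite[Lemma 4]{Bl2016}), so the comparison is with Blomer's argument, which proceeds by identifying $K(\mu_2I,\mu_1I;C)$ for $C\in\operatorname{GO}_2(\Z)$ with a Kloosterman sum over $\Z[i]$ of modulus $x+iy$ and only then averaging over $\mu_1,\mu_2$.

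The genuine gap is your vanishing step for $q_i\neq 1$. You claim $G_i\equiv 0$ pointwise in $D$ from "the standard vanishing criterion", but that criterion only applies when the conductor of $\chi_{q_i}$ is coprime to the exact denominator $g'=g/(s,g)$ of $\gamma_i(D)$. When the conductors match, the sum does not vanish: for instance with $q_i=-4$, $4\mid g$ and $s$ odd one gets
\begin{align*}
\sum_{\mu \,(\operatorname{mod} 4)}\chi_{-4}(\mu)\,e(\mu/4)=2i\neq 0,
\end{align*}
and such $D$ do occur for $C$ with $4\mid (x,y)$. So the lemma cannot be proved term by term in $D$; after executing the $\mu$-sums the surviving terms carry values $\overline{\chi_{q_i}}$ evaluated at the parameters of $D$ (essentially at a residue class in $(\Z[i]/(x+iy))^{\times}$ and at its counterpart coming from $A$), and the cancellation must come from this multiplicative character sum over the $D$-classes — exactly the mechanism that the $\Z[i]$-Kloosterman interpretation makes transparent. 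Relatedly, your final step in the case $q_1=q_2=1$ — that the admissible $D$ with both traces integral are in bijection with $(\Z[i]/(x+iy))^{\times}$, giving the count $\varphi(x+iy)$ — is asserted rather than proved; establishing this bijection (including primitivity of $(C,D)$ and the dependence of $A$ on $D$) is the main content of the lemma and is precisely what the parametrization over $\Z[i]$ supplies. As it stands, the proposal proves neither the vanishing for nontrivial $q_i$ nor the main-term evaluation.
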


\section{Proof of Theorem 1}

The proof follows \cite{Bl2016}  closely. By the approximate functional equation \eqref{eq:3.0} we have 
\begin{align} \label{1}
\sum_{F \in B_k^{(2)}(N)^{\textnormal{new,T}}}    \! \!	\! \! &w_{F,N} L(1/2,F \times \chi_{q_1}) \overline{L(1/2,F \times \chi_{q_2})}  \\ \notag
= \! \!	\! \! \sum_{F \in B_k^{(2)}(N)^{\textnormal{new,T}}} \! \!	\! \!	 w_{F,N} &\sum_{n_1, n_2} \frac{A_F(n_1) \overline{A_F(n_2)} \chi_{q_1}(n_1) \chi_{q_2}(n_2) }{n_1^{1/2} n_2^{1/2}}  W\Kl{\frac{n_1}{N \abs{q_1}^2}}  W\Kl{\frac{n_2}{N \abs{q_2}^2}}.
\end{align} 
To apply Andrianov's formula \eqref{eq:0.4}, we need $n_1, n_2$ to be coprime to $N$. Hence, we need to estimate
\begin{align*} 
L_N(F,\chi_{q})  := \sum_{N \mid n} \frac{A_F(n) \chi_{q}(n)}{n^{1/2}} W\left(\frac{n}{N \abs{q}^2}\right).
\end{align*}
By the decay \eqref{eq:0.1} of  the weight function,  we truncate the sum at $n \ll N^{1+\eps}$ at a negligible error. By Lemma \ref{Lemma:4} we get
\begin{align*}
L_N(F,\chi_{q}) = \frac{A_F(N)}{N^{1/2}} \sum_{n \ll N^{\eps}} \frac{ A_F(n)\chi_{q}(N n)}{n^{1/2}} W\left(\frac{n}{ \abs{q}^2}\right)+ \mathcal{O}(N^{-100}) \ll  N^{-13/22+\eps}. 
\end{align*}
By applying Cauchy Schwarz and  making use of the further computations of this section, i.e.  $ \sum_{F} 
w_{F,N} \abs{L^N(1/2,F \times \chi_{q})}^2  \ll N^{\epsilon}$,  we can remove all terms on the right hand side of \eqref{1} with $N \mid n_1 n_2$ at the cost of an error of $\mathcal{O}(N^{-13/22+\eps})$. 

For newforms that are Saito-Kurokawa lifts, we cannot apply \eqref{eq:3.0}. Fortunately, their contribution  is very small on both sides of \eqref{1}, so we can extend the basis to include lifts at the cost of a small error. For the left hand side, we apply \eqref{eq:05} and the convexity bound for central $L$ values, getting an error of $\mathcal{O}(N^{-5/4+\eps})$. By double Mellin inversion and \eqref{eq:04}  the right hand side of \eqref{1}  with $F$ running over an orthogonal basis of $ S_k^{(2)}(N)^{\textnormal{new,SK}}$ and $n_1,n_2$ coprime to $N$ equals
\begin{align} \notag
= \frac{- 3 (2\pi)^5 \Gamma(2k-4)}{\Gamma(2k-1)} \int_{(2)}  \int_{(2)}  \! \! 
L^N(s,\chi_{q_1})L^N(s+1,\chi_{q_1}) L^N(s,\chi_{q_2})L^N(s+1,\chi_{q_2}) \\ \label{eq:07} \times \sum_{f \in B^{(1)}_{2k-2}(N)^{\textnormal{new}}} \frac{L(1/2,f \times \chi_{-4}) L^N(1/2+s,f \times \chi_{q_1}) L^N(1/2+s,f \times \chi_{q_2}) }{L(3/2,f) L(1,f,\operatorname{Ad})} 
 \\  \times \notag \frac{L_{\infty}(s+\frac{1}{2})}{L_{\infty}(1/2)}\frac{L_{\infty}(t+\frac{1}{2})}{L_{\infty}(1/2)} (1-s^2)(1-t^2) N^{s+t-3} \abs{q_1}^{2s} \abs{q_2}^{2t}~   \frac{ds dt}{st},
\end{align}  
where $B^{(1)}_{2k-2}(N)^{\text{new}}$ is  an orthogonal basis  of $S_{2k-2}^{(1)}(N)^{\text{new}}$ 
We can shift the $s$-contour to $\Re s= \eps$, since the pole at $s=1$ (for  $q_1=1$) cancels with the zero of $(1-s^2)$. In the same manner, we shift $t$-contour to $\Re t= \eps$. By the convexity bound, we conclude that \eqref{eq:07} is bounded by $\mathcal{O}(N^{-5/4+\eps})$. 

By combing these estimations and applying  \eqref{eq:0.4}, we obtain 
\begin{align} \notag
\sum_{F \in B_k^{(2)}(N)^{\textnormal{new}}} &  w_{F,N} L(1/2,F \times \chi_{q_1}) L(1/2,F \times \chi_{q_2})  \\  \label{eq:sec5} 
&=   4   \sum_{\substack{n_1,n_2\\m_1,m_2}} \! \! \! \!{\vphantom{\sum}}^* ~ \frac{r_1(n_1) r_2(n_2)\chi_1(m_1) \chi_2(m_2)}{(n_1 n_2 m_1 m_2)^{1/2}} 
 W\Kl{\frac{n_1 m_1}{\abs{q_1}^{2}N}}  W\Kl{\frac{n_2 m_2}{\abs{q_2}^{2}N}}  \\ \notag
 &\times  \sum_{F \in B_k^{(2)}(N)^{\textnormal{new}}}   c_N  \frac{a_F (m_1 I) \overline{a_F(m_2 I)}}{\norm{F}^2} + \mathcal{O}(N^{-13/22+\eps}),
\end{align}
where $\sum^*$ denotes that all summands are coprime to $N$. To apply Kitaoka's formula on the right hand side, the sum needs to run over an orthogonal basis of the whole space including oldforms. To show that we can include the oldspace at the cost of a small error, we truncate at  $n_1,n_2,m_1,m_2  \ll N^{1+\eps}$,  use $r_1(n),r_2(n) \ll n^{-\frac{1}{2}+\eps}$ and  $a_F(mI) \ll m^{1/2} a_F(I)$, cf.\ \eqref{mcp}. Hence, the first term on the right hand side of \eqref{eq:sec5} with $F$ running over a basis of the oldspace $B_k^{(2)}(N)^{\text{old} }$ is bounded by 
\begin{align} \label{oldforms}
N^{2+\eps} \sum_{F \in B_k^{(2)}(N)^{\text{old} }}   w_{F,N}.
\end{align} 
We choose the orthogonal basis from  \eqref{basis:T} and \eqref{basis:SK} for $B_k^{(2)}(N)^{\text{old} }$. Since there are only $\mathcal{O}(1)$ oldforms, we conclude by  Lemma \ref{LemmaT} and \ref{LemmaSK} that \eqref{oldforms} is bounded by $\mathcal{O}(N^{-1+\eps})$. 

Neglecting the error terms and  applying \eqref{eq:08},  display \eqref{eq:sec5} equals 
\begin{align}\notag
4 \sum_{\substack{n_1,n_2 \\m_1,m_2}} \! \! \! \!{\vphantom{\sum}}^* \ \frac{r(n_1) r(n_2)   \chi_{q_1}(m_1) \chi_{q_2}(m_2)}{(n_1 n_2 m_1 m_2)^{1/2} } W\Kl{\frac{n_1 m_1}{\abs{q_1}^{2}N}}  W\Kl{\frac{n_2 m_2}{\abs{q_2}^{2}N}} \frac{1}{8} m_2^{k-3/2}h_{m_2 I}(m_1 I). 
\end{align} 
The Fourier coefficients $h_{m_2 I}(m_1 I)$ are given by Lemma \ref{lemma2}. We compute the three terms separately. The diagonal term is given by 
\begin{align} \notag
4 \sum_{n_1,n_2,m} \! \! \! \! \!{\vphantom{\sum}}^* \ \frac{r_1(n_1) r_2(n_2) \chi_1(m_1)  \chi_2(m_2) W\Kl{\frac{n_1 m}{\abs{q_1}^{2}N}}  W\Kl{\frac{n_2 m}{\abs{q_2}^{2}N}}}{(n_1 n_2 m^2)^{1/2} }.
\end{align}
By double Mellin inversion, this equals
\begin{align} \begin{split}
\label{eq:pole}
= \frac{4}{(2 \pi i)^2} \int_{(2)}   \int_{(2)}  L^N(s+1,\chi_{q_1}) L^N(s+1,\chi_{-4q_1}) L^N(t+1,\chi_{q_2}) L^N(t+1,\chi_{-4q_2})  
\\  \times  L^N(s+t+1,\chi_{q_1q_2})   \frac{L_{\infty}(s+1/2)}{L_{\infty}(1/2)}\frac{L_{\infty}(t+1/2)}{L_{\infty}(1/2)} (1-s)^2(1-t)^2 N^s N^t\abs{q_1}^{2s} \abs{q_2}^{2t}~   \frac{ds ds}{st}.
\end{split}
\end{align} 
Next, we add the local spin $L$-factors at $N$. Therefore,  we write 
\begin{align*}
L^N(s+1,\chi_{q_1}) = L(s+1,\chi_{q_1}) - \frac{\chi_{q_1}(N)}{N^{s+1}} L(s+1,\chi_{q_1}).
\end{align*}

To bound the contribution from the second term, we shift the $t$ contour in \eqref{eq:pole} to $\Re = \eps$ obtaining
\begin{align*}
\int_{(\eps)} \!  \int_{(2)} \frac{\chi_{q_1}(N)}{N^{s+1}} L(s+q,\chi_{q_1}) L^N(s+1,\chi_{-4q_1})  \cdots (1-t)^2 N^s N^t \frac{ds ds}{st} \ll N^{-1+\eps}.
\end{align*}
For the other four $L$-functions, this works analogously. 
 
To compute \eqref{eq:pole} (with local $L$-factors at $N$), we  shift the $s$-contour to $\Re = -1 +\eps$ picking up a pole at $s=0$ of order 1 or 2. To estimate the remaining integral, we shift the $t$-contour to  $\Re t  = -1 +\eps$, picking up a pole at $t=-s$ (since $(q_1,q_2)=1$ that can only happen if $q_1 =q_2=1$) and a pole at $t=0$. 
The latter pole and the remaining integral are $\mathcal{O}(N^{-2+\epsilon})$, while the residue at $t=-s$ equals 
\begin{align} \notag
\frac{-4}{2 \pi i} \int_{-1+\eps} \! \!\!\!  \! \! \Gamma(s+1) \zeta_{\Q(i)}(s+1) \Gamma(1-s) \zeta_{\Q(i)}(1-s) \frac{\Gamma(k-1+s)\Gamma(k-1-s)}{\Gamma(k-1)^2}(1-s^2)^2 \frac{ds}{s^2}.
\end{align}
It is not necessary to simplify this term further, since it will cancel  with another term from the rank 2 contribution. 

To treat the pole at $s=0$, we shift the $t$-contour to $\Re = -1 +\eps$ picking up a pole at $t=0$ of order at most 4 that comes from the terms $L(t+1,\chi_{q_2})$, $L'(t+1,\chi_{q_1q_2})$ and $t^{-1}$. The remaining integral is $\mathcal{O}(N^{-2+\epsilon})$. The residue at $s=t=0$ is the main term  in Theorem \ref{Theorem1}. For $q_1,q_2 \in \{1,-4\}$, both poles have at least order 2 and we obtain a polynomial  in $\operatorname{log}N$, whose leading coefficient can be read off after applying Taylor expansion. For $q_1,q_2 \not\in \{1,-4\}$ each pole is of order 1 and hence, the residue does not depend on $N$. 

Next, we consider the rank 1 contribution. By the decay of  $W$, we  truncate  $n_1,n_2,m_1,m_2$ at $\ll N^{1+\epsilon}$ at a negligible error. There are $s^{\epsilon}$ choices for $U,V$ and it must hold $[m_1,m_2] \mid s$. Hence, by \eqref{rank1} and $r_1(n),r_2(n) \ll n^{-\frac{1}{2}+\epsilon}$, the rank 1 contribution is bounded by
\begin{align*} 
N^{\eps} \sum_{m_1, m_2 \ll N^{1+\eps}}\! \! \!\!\!\!\! \! \! \!  \! \!{\vphantom{\sum}}^* \ \ \ \ \frac{1}{(m_1 m_2)^{1/2}}\sum_{s,c} \frac{(Nc,[m_1 m_2]s)^{1/2}}{( [m_1 m_2] s)^{1/2-\epsilon}} \abs{J_{\ell}\Kl{\frac{4 \pi m_1 m_2}{ [m_1 m_2] N c s}}}.
\end{align*}
Setting $d=(m_1,m_2)$, the previous display is bounded by
\begin{align*} 
N^{\eps} &\sum_{\substack{m_1,m_2,d \ll N^{1+\epsilon} }}\!\! \!\! \!\!\!\!\! \! \! \!  \! \!{\vphantom{\sum}}^* \ \ \ \frac{(m_1 m_2,N)^{\frac{1}{2}}}{m_1 m_2  } d^{-\frac{3}{2}}\sum_{s,c} c^{1/2} s^{\epsilon} \abs{J_{\ell}\Kl{\frac{4 \pi d }{  N c s}}} \\
&\ll N^{-1/2+\eps} \sum_{d \ll N^{1+\eps}} d^{-1} \left(\frac{d}{N}\right)^{l-1/2} \sum_{s,c} c^{1/2-l} s^{-l+\epsilon}  \ll N^{-\frac{1}{2}+\epsilon},
\end{align*}
where we used $\eqref{eq:0.2}(iii)$ for the second step. 

It remains to treat the rank 2 contribution
\begin{align} \label{eq:10}
4 \pi ^2 \sum_{n_1,m_1,n_2,m_2}   \! \!\!\!\!\! \! \! \!  \! \!{\vphantom{\sum}}^* \ \  \frac{r_1(n_1)r_2(n_2) W\Kl{\frac{n_1 m_1}{\abs{q_1}^2 N }} W\Kl{\frac{n_2 m_2}{ \abs{q_2}^2 N}} \chi_{1}(m_1) \chi_{2}(m_2)}{(n_1 m_1 n_2 m_2)^{1/2}} \\ \notag
\times \sum_{\operatorname{det}C \neq 0 } \frac{K(m_2 I, m_1 I;N C)}{N^3 \abs{\operatorname{det}C}^{3/2}} \mathcal{J}_{\ell} \Kl{\frac{m_1 m_2 C^{-1} C^{-T}}{N^2}}.
\end{align}
By \eqref{eq:0.1} we  truncate the $n_1,m_1,n_2,m_2$ sums at $n_1m_1, n_2m_2 \ll N^{1+\eps}$ at the cost of a negligible error. This allows us to apply  Lemma \ref{Lemma5} which bounds the contribution of all $C \not\in \mathscr{C}(\beta)$ by $\mathcal{O}(N^{-\beta+(5+5\beta)/2l+\eps})$. For notational simplicity,  we set $c:= \abs{\operatorname{det} C}$. To apply Poisson summation,  we complete the $n_1,m_1,n_2,m_2$ sum at a negligible error and split $m_1, m_2$ in residue classes modulo  $N [c,q_1]$ and $N [c,q_2]$. This way, display \eqref{eq:10} equals
\begin{align*} 
4 \pi^2 \sum_{n_1,n_2}    \!  \! \!{\vphantom{\sum}}^* \   \frac{r_1(n_1 )r_2(n_2)}{(n_1  n_2)^{1/2}} \sum_{C \in \mathscr{C}(\beta)} \sum_{\substack{\mu_1 (\operatorname{mod} N [c,q_1]) \\  \mu_2 (\operatorname{mod} N [c,q_1]) }} \! \! \!\!\!\!\! \!\! \! \!  \! \!{\vphantom{\sum}}^* \ \ \ \chi_{q_1}(\mu_1) \chi_{q_2}(\mu_2) \frac{K(\mu_2 I, \mu_1 I; N C)}{N^3 c^{3/2}} \\ \times
\sum_{\substack{ m_1 \equiv \mu_1\, (N [c,q_1]) \\ m_2 \equiv \mu_2\, ( N [c,q_2])}} \frac{ W \Kl{\frac{n_1 m_1}{\abs{q_1}^2 N}} W\Kl{\frac{n_2 m_2}{\abs{q_2}^2 N}}}{m_1^{1/2}m_2^{1/2}} \mathcal{J}_{\ell}\Kl{\frac{m_1 m_2 C^{-1} C^{-T}}{N^2}} + \mathcal{O}(N^{-\beta+\frac{5+5\beta}{2l}+\eps}).
\end{align*} 
Poisson summation yields for the main term
\begin{align}  \notag 
4 \pi ^2 \sum_{n_1,n_2} \!  \! \!{\vphantom{\sum}}^* \  \frac{r_1(n_1)r_2(n_2) }{(n_1  n_2)^{1/2}} \sum_{C \in \mathscr{C}(\beta)} \sum_{\substack{\mu_1 (\operatorname{mod} N [c,q_1]) \\  \mu_2 (\operatorname{mod} N [c,q_1])  }}  \! \! \!\!\!\!\! \!\! \! \!  \! \!{\vphantom{\sum}}^* \ \ \ \chi_{q_1}(\mu_1) \chi_{q_2}(\mu_2) \frac{K(\mu_2 I, \mu_1 I; N  C)}{N^5 [c,q_1][c,q_2]c^{3/2}} \\ \label{eq:1} \times
\sum_{h_1, h_2 \in \Z} \Psi_{n_1,n_2}(N C;h_1,h_2) e \Kl{-\frac{\mu_1 h_1}{N [c,q_1]}-\frac{\mu_2 h_2}{N [c,q_2]}},
\end{align} 
where $\Psi_{n_1,n_2}(N C;h_1,h_2)$ is 
\begin{align*}
\int_{\R} \int_{\R} \frac{W\Kl{\frac{n_1 x_1}{\abs{q_1}^2 N }} W\Kl{\frac{n_2 x_2}{ \abs{q_2}^2 N}}}{\sqrt{x_1 x_2}} \mathcal{J}_{\ell}\Kl{\frac{x_1 x_2 C^{-1} C^{-T}}{N^2}} 
e\Kl{\frac{x_1 h_1 }{N [c,q_1]}+\frac{x_2 h_2 }{N [c,q_2]}} dx_1 dx_2.
\end{align*}
Substituting $x_1$ by $N x_1$ and $x_2$ by $N x_2$,  this integral simplifies to 
\begin{align*}
N \int_{\R} \int_{\R} \frac{ W\Kl{\frac{n_1 x_1}{\abs{q_1}^2  }} W\Kl{\frac{n_2 x_2}{ \abs{q_2}^2 }}}{\sqrt{x_1 x_2}}   \mathcal{J}_{\ell} \Kl{x_1 x_2 C^{-1} C^{-T}}  e\Kl{\frac{x_1 h_1 }{ [c,q_1]}+\frac{x_2 h_2 }{[c,q_2]}}  dx_1 dx_2,
\end{align*}
which we  denote by $N \tilde{\Psi}_{n_1,n_2} (C;h_1,h_2) $. 
By applying partial summation sufficiently often with respect to $x_1$ and $x_2$ (integrating the last term and differentiating the rest), we can truncate the $h_1,h_2$ sum at $h_1,h_2 \ll N^{(1+\beta)/\ell+\eps}$ at a negligible error. 

We choose $s,t \in \Z$ with  $sN + t c =1$ and $(s,q_2)=1$. By Lemma \ref{lemma3}, the Kloosterman sum decomposes  into
\begin{align} \notag
K(\mu_2 I, \mu_1 I, N C) = K(\mu_2 t^2 c C^{-1} c C^{-T}, \mu_1 I, N I) K(s^2 \mu_2 I, \mu_1 I, C). 
\end{align}
The first Kloosterman sum on the right hand side equals by Lemma \ref{lemma4}
\begin{align*}
\sum_{\substack{d_1,d_2,d_4 (\operatorname{mod} N) \\ N \nmid \delta }} e\Kl{\frac{\mu_2 t^2 \overline{\delta} (d_4 c_1 -d_2 c_2 + d_1 c_4)+ \mu_1(d_1 +d_4)}{N}},
\end{align*}
where $c C^{-1} c C^{-T}= \begin{pmatrix}
c_1  & c_2/2  \\
c_2/2 & c_4 \\
\end{pmatrix}$ with $c_1, c_2,c_4 \in \Z$, $\delta =d_1 d_4-d_2^2$. 

Next, we split $\mu_i \operatorname{mod} N [c,q_i]$ into $\nu_i \operatorname{mod} N$ and $\gamma_i \operatorname{mod} [c,q_i]$, i.e. write $\mu_i = \nu_i [c,q_i] + \gamma_i N$. Then $\chi_{q_i}(\mu_i) = \chi_{q_i}(N \gamma_i)$. Consequently, display \eqref{eq:1} equals 
\begin{align} \label{eq:38}
&4 \pi ^2 \sum_{n_1,n_2} \!  \! \!{\vphantom{\sum}}^* \ \frac{r_1(n_1)r_2(n_2) }{(n_1  n_2)^{1/2}} \sum_{C \in \mathscr{C}(\beta)} \frac{1}{N^4 [c,q_1][c,q_2]c^{3/2}}\sum_{h_1, h_2 \in \Z} \tilde{\Psi}_{n_1,n_2}( C;h_1,h_2) \\ \notag \times  &\sum_{\substack{\gamma_1\,(\operatorname{mod} [c,q_1]) \\  \gamma_2\,(\operatorname{mod} [c,q_2]) }} \chi_{q_1}(N\gamma_1) \chi_{q_2}(N\gamma_2) K(s \gamma_2 I, N \gamma_1 I, C) e\Kl{-\frac{\gamma_1 h_1}{[c,q_1]} - \frac{\gamma_2 h_2 }{[c,q_2]}}  \\ \notag
\times &\sum_{\substack{d_1,d_2,d_4\, (\operatorname{mod}N) \\ N \nmid \delta }}  \sum_{\nu_1 \neq 0 \,(\operatorname{mod} N) } \ e\Kl{\frac{\nu_1 ([c,q_1] (d_1 +d_4)-h_1)}{N}}  \\ \notag
\times &\sum_{ \nu_2 \neq 0 \,(\operatorname{mod} N)}  \ e\Kl{\frac{\nu_2( [c,q_2] t^2 \overline{\delta}  (d_4 c_1 -d_2 c_2 + d_1 c_4) - h_2)}{N}}.
\end{align} 
Thus, we need to count the number of solutions of  $\mathcal{L}_1:  h_1 \equiv  [c,q_1](d_1 +d_4)\ (\operatorname{mod} N )$ and $\mathcal{L}_2: h_2 \delta \equiv   [c,q_2] t^2 (d_4 c_1 -d_2 c_2 + d_1 c_4 ) \ (\operatorname{mod} N )$. For fixed $c_1,c_2,c_4,h_1,h_2$, the last two lines of  display \eqref{eq:38} equal:
\begin{align*}
\sum_{\substack{d_1,d_2,d_4\, (\operatorname{mod}N) \\ N \nmid \delta }} \! \! \! N^2 \# \{d_i \, | \, \mathcal{L}_1 \cap \mathcal{L}_2\} - N  (\# \{d_i \, | \, \mathcal{L}_1\} + \# \{d_i \, | \, \mathcal{L}_2\}) + 1.
\end{align*}
The first term is computed in Lemma \ref{lemma5} and is of size $N^4+\mathcal{O}(N^3)$ for $h_1=h_2=0$ and $c_1=c_4,c_2=0$ and is $\mathcal{O}(N^3)$ in all other cases. The second term is of size $\mathcal{O}(N^3)$ since both congruences already fix one $d_i$ modulo $N$. 

The condition that $C^TC$ is a multiple of the identity for invertible $C$   is equivalent  to $C \in \operatorname{GO}_2(\Z)$, cf.\ \eqref{GO2} and \cite[p.\,1770]{Bl2016}. By Remark \ref{remark}, there are $\mathcal{O}(N^{5(1+\beta)/\ell+\epsilon})$ choices for $h_1,h_2$ and the entries of $C$. Hence, display \eqref{eq:38} equals 
\begin{align*}  
 4 \pi ^2 \sum_{n_1,n_2} \!  \! \!{\vphantom{\sum}}^* \ \frac{r_1(n_1)r_2(n_2) }{(n_1  n_2)^{1/2}} \! \! \sum_{C \in \operatorname{GO}_2(\Z) } \frac{\chi_{q_2}(N\bar{s})}{[c,q_1][c,q_2]c^{3/2}}   \! \!  \sum_{\substack{\gamma_1\,(\operatorname{mod} [c,q_1]) \\ \gamma_2\,(\operatorname{mod} [c,q_2]) }} \! \! \! \! \chi_{q_1}(\gamma_1) \chi_{q_2}(\gamma_2) K(\gamma_2 I,  \gamma_1 I, C) \\
 \int_{\R} \int_{\R}  \frac{W(n_1 x_1/\abs{q_1}^2 ) W(n_2 x_2\abs{q_2}^2)}{\sqrt{x_1 x_2}}  \mathcal{J}_{\ell}\Kl{x_1 x_2 C^{-1} C^{-T}}
 dx_1 dx_2  +  \mathcal{O}(N^{-1+\frac{5+5\beta}{\ell}+\epsilon}).
\end{align*}
We choose $\beta$ such that  $\mathcal{O}(N^{-1+\frac{5+5\beta}{l}+\epsilon})$ is of the same size as the previous error term  $\mathcal{O}(N^{-\beta+\frac{5+5\beta)}{2\ell}+\eps})$, i.e. $\beta = \frac{2\ell-5}{2\ell+5} = \frac{2k-8}{2k+2}$. 

Applying Lemma \ref{Blomer}, we see that the sum vanishes for $q_1 \neq 1$ or $q_2 \neq 1$. Since matrices $C \in\operatorname{GO}_2(\Z)$ fulfill $C^{-1}=C^T c^{-1}$, the main term of the previous display equals
\begin{align*}  
 8 \pi ^2 \sum_{n_1,n_2 }  \!  \! \!{\vphantom{\sum}}^* \frac{r_1(n_1)r_2(n_2) }{(n_1  n_2)^{1/2}}  \!  \! \! \!  \! \sum_{\substack{\gamma \in \Z[i] \\ \abs{\gamma}^2 \ll N^{\frac{1+\beta}{l}+\eps}}} \!  \! \!  \! \! \frac{\phi(\gamma)}{\abs{\gamma}^3}  \int_{\R} \int_{\R} \int_{0}^{\pi/2} \frac{W(n_1 x_1 ) W(n_2 x_2)}{\sqrt{x_1 x_2}} \mathcal{J}_{\ell}\Kl{\frac{x_1 x_2}{\abs{\gamma}^2}I} dx_1 dx_2.
\end{align*}
By \eqref{eq:0.1} and \eqref{eq:0.2} we can complete the $\gamma$-sum at a negligible error. By the computation in \cite[\S 7]{Bl2016}, the resulting term equals
\begin{align*}
16 \pi^2 \int_{(2)} \sum_{n_1,n_2 }  \!  \! \!{\vphantom{\sum}}^* \frac{r_1(n_1)r_2(n_2) }{(n_1  n_2)^{1+s/2}}  \cdot \frac{\zeta_{\Q(i)} \! \left(\frac{s+1}{2}\right)}{\zeta_{\Q(i)} \! \left(\frac{s+3}{2}\right)}  \bigg( \frac{L_{\infty}(1+s/2)\Kl{1-\frac{s+1}{2}}^2}{L_{\infty}(1/2)(s+1)/2 }  \bigg)^2   \\
\times \frac{(4 \pi)^s \Gamma(k-\frac{3+s}{2})}{(1+s)\Gamma(k-\frac{s-1}{2})} \frac{ds}{2 \pi i}.
\end{align*}
We add the summands $n_1,n_2$ that are multiples of $N$ at the cost of an error of size $\mathcal{O}(N^{-5/2})$. By changing variables, applying the functional equation of the Dedekind zeta-function and shifting the contour, the previous term equals 
\begin{align} \notag
\frac{4}{2 \pi i}  \int_{-1+\eps} \! \! \! \zeta_{\Q(i)}(s+1) \zeta_{\Q(i)}(1-s) \frac{\Gamma(k-1+s)\Gamma(k-1-s)\Gamma(s+1)  \Gamma(1-s) }{\Gamma(k-1)^2} (1-s^2)^2 \frac{ds}{s^2}
\end{align}
which cancels with the residue $s=-t$ from the rank 0 case. \\

\textit{Acknowledgement.} The author would like to thank Valentin Blomer for many useful suggestions and remarks, the unknown referee for the careful reading and the useful comments and Ralf Schmidt for explaining the definition and estimation of ramified local $L$-factors. 

\bibliographystyle{amsplain}
\bibliography{mybibfabi17}
\end{document}